\documentclass{amsart}
%
\usepackage{amssymb}
\usepackage{amsmath}
\usepackage{latexsym}
\usepackage{mathrsfs}
\usepackage{comment}
\usepackage{graphicx}
\usepackage{caption}
\usepackage{color}

\newtheorem{theorem}{Theorem}[section]
\newtheorem{thm}[theorem]{Theorem}

\newtheorem{defn}[theorem]{Definition}
\newtheorem{rem}[theorem]{Remark}
\newtheorem{lem}[theorem]{Lemma}
\newtheorem{corol}[theorem]{Corollary}



\catcode`@=11 \@addtoreset{equation}{section} \catcode`@=12
\newcommand{\R}{\mathbb{R}}
\newcommand{\Hmm}[1]{\leavevmode{\marginpar{\tiny%
$\hbox to 0mm{\hspace*{-0.5mm}$\leftarrow$\hss}%
\vcenter{\vrule depth 0.1mm height 0.1mm width \the\marginparwidth}%
\hbox to
0mm{\hss$\rightarrow$\hspace*{-0.5mm}}$\\\relax\raggedright #1}}}
\newcommand{\dx}{\,\mathrm{d}x}

\newcommand{\loc}{{\mathrm{loc}}}
\newcommand{\Div}{\mathrm{div}}
 \newcommand{\N}{\mathbb{N}}

\def\ga{\alpha}     \def\gb{\beta}       \def\gg{\gamma}
             
                         \def\vge{\varepsilon}
           
            \def\gl{\lambda}
\def\gm{\mu}

\def\Gw{\Omega}              

\begin{document}
\title[Hardy inequality]{$L^p$ Hardy inequality on $C^{1,\gamma}$ domains}
%
\author{Pier Domenico Lamberti}
\address{%
Dipartimento di Matematica ``Tullio Levi-Civita"\\
Universit\`a degli Studi di Padova\\
Via Trieste, 63\\
35126 Padova\\
Italy}
\email{lamberti@math.unipd.it}
%
\author{Yehuda Pinchover}
\address{Department of Mathematics\\
Technion - Israel Institute of Technology\\
Haifa 32000\\
Israel }
\email{pincho@technion.ac.il}
\subjclass[2010]{49R05, 35B09, 35J92.}
\keywords{Hardy inequality, Rayleigh quotient, spectral gap.}
%
%
\begin{abstract}
We consider the $L^p$ Hardy inequality involving the distance to the boundary of a domain in the $n$-dimensional Euclidean space with nonempty compact boundary.
We extend the validity of known existence and non-existence results, as well as the appropriate tight decay estimates for the corresponding minimizers, from the case of domains of class $C^2$ to the case of domains of class $C^{1,\gamma}$ with  $\gamma \in (0,1]$. We consider both bounded and
exterior domains.
The upper and lower estimates for the minimizers in the case of exterior domains and the corresponding related non-existence result seem to be new even for  $C^2$-domains.
\end{abstract}
%
\maketitle
\section{Introduction}\label{sec0}
Let $\Omega$ be a domain in ${\mathbb{R}}^n$, $n\geq 2$ with nonempty boundary, and let $\delta (x)=d(x,\partial \Omega )$ denote the distance
of a point $x\in {\mathbb{R}}^n$ to the boundary of $\Omega$. Fix $p\in]1,\infty [$. We say that the {\em $L^p$ Hardy inequality} is satisfied in $\Omega $
if there exists $c>0$ such that
\begin{equation}
\label{mainhardy}
\int_{\Omega }|\nabla u|^p\dx \geq c \int_{\Omega }\frac{|u|^p}{\delta ^p}\dx  \qquad \mbox{ for all $u\in C^{\infty }_c(\Omega)$}.
\end{equation}
The {\em $L^p$ Hardy constant} of $\Omega$ is
the best constant for inequality (\ref{mainhardy}) which is denoted  here by  $H_p(\Omega )$. It is a classical result that goes back to Hardy himself (see for example \cite{baevke,permalkuf}) that if
$n=1$ and $\Omega $ is a bounded or unbounded interval, then the $L^p$ Hardy inequality holds and $H_{p}(\Omega)$ coincides with the widely known constant
$$
c_p=\biggl(\frac{p-1}{p}\biggr)^p.
$$
It is also well-known that   if $\Omega $ is bounded and has a sufficiently regular boundary in ${\mathbb{R}}^n$, then the $L^p$ Hardy inequality holds and $H_p(\Omega )\le c_p$ \cite{anc,mamipi}.  Moreover, if $\Omega$ is convex, and more generally if it is weakly mean convex, i.e., if $\Delta d\le 0$ in the distributional sense in $\Omega$, then $H_p(\Omega )=c_p $ \cite{barfilter, dam, mamipi}. On the other hand, it is also well-known (see for example \cite{baevke,permalkuf}) that if $\Omega ={\mathbb{R}}^n\setminus\{0\}$ and $p\ne n$, then the $L^p$-Hardy inequality holds and  $H_{p}(\Omega)$  coincides with the other widely  known constant
$$
c_{p,n}^*=\biggl|\frac{p-n}{p}\biggr|^p,
$$
which indicates  that  the $L^p$ Hardy inequality does not hold for ${\mathbb{R}}^n\setminus\{0\}$ if $p=n$ (for a short proof of this inequality see \cite{dev,defrpi}).   It also follows (see \cite{chwi,mamipi}) that if $\Omega $ is an exterior domain (i.e., an unbounded domain with nonempty compact boundary) with  sufficiently regular boundary and $p\ne n$, then the $L^p$ Hardy inequality holds with $H_p(\Omega )\le c_{p,n}$, where
$$
c_{p,n}=\min \{c_p, c_{p,n}^*  \}.
$$

The $L^p$ Hardy constant can be seen as the infimum of a Rayleigh quotient, namely
\begin{equation}\label{ray}
H_p(\Omega )=
\inf_{\substack{u\in \widetilde W^{1,p}(\Omega ) \\ u\ne 0}} \frac{\int_{\Omega} |\nabla u|^p\dx}{\int_{\Omega }\frac{|u|^p}{\delta ^p}\dx},
\end{equation}
 where
 $$  \widetilde W^{1,p}(\Omega ) := \{u\in W^{1,p}_{\loc}(\Omega )\mid   \|u\|_{L^p(\Omega ;\delta^{-p} )}+ \|\nabla u\|_{L^p(\Omega )}<\infty  \},$$
   and $ \|u\|_{L^p(\Omega ;\delta^{-p} )}:=(\int_{\Omega }|u|^p\delta^{-p}\dx)^{1/p}$ is the natural weighted $L^p$ norm associated with this problem.  Note that if $\Omega$ is a bounded domain with regular boundary, say of class  $C^1$,  then  $\widetilde W^{1,p}(\Omega ) = W^{1,p}_0(\Omega )$ (one can use the same argument as in  \cite[Appendix B]{mamipi}), while the two spaces do not coincide if, for example, $\Omega$ is an exterior domain and $p>n$, in which case the  first space  contains functions that are constant or even unbounded at infinity.

  It is important to note  that if the infimum for (\ref{ray})  is achieved at a function  $u$, then $u$  satisfies the corresponding Euler-Lagrange equation
 \begin{equation}\label{classiceq}
  -\Delta_pu - \frac{H_p(\Omega) }{\delta^p} {\mathcal I}_pu = 0,
   \end{equation}
in $\Omega$,  where $-\Delta_p v: =-\Div\big(|\nabla v|^{p-2}\nabla v\big)$ is the celebrated $p$-Laplace operator, and the operator ${\mathcal{I}}_p $ is defined by
${\mathcal I}_pv:= |v|^{p-2}v$. In this case, $H_p(\Omega)$ can be considered as the principal weighted eigenvalue of the $p$-Laplacian with respect to the Hardy weight, and $u$ is a corresponding principal eigenfunction. In particular, it  turns out that if the minimizer $u$ exists, then it is unique up to scalar multiples, and $u$ does not change its sign in $\Omega$.

We refer to   \cite{mamipi} for an introduction to this topic and to   \cite{anc, baevke, barfilter, bremar, chwi, dam, darev, davies, permalkuf, laso, marsha} and references therein for more information. We refer also to  \cite{avk, barlam, barlamisr, BarTer, BarTer2, dev, depi}   for recent developments in this subject.

The focus of the present paper is on the  problem of the  existence of  minimizers for (\ref{ray}). In the case of bounded domains of class $C^2$ this problem was solved
in \cite{mamipi, marsha}  where, among other results, it  was proved that a minimizer  exists for (\ref{ray}) if and only if  $H_p(\Omega )<c_p$.
In the case of exterior $C^2$-domains, it was proved in \cite{chwi} that if $H_p(\Omega)< c_{p,n}$,  then a minimizer exists for (\ref{ray}).
Importantly, in   \cite{chwi, mamipi, marsha} the assumption that $\Omega$ is of class $C^2$  is used in a substantial way, and weakening this
assumption seems highly nontrivial. Indeed, many arguments used in such papers are based on the well-known {\em tubular neighbourhood theorem} which allows to use tubular coordinates
near the boundary of  a domain of class $C^2$. Moreover, in \cite{mamipi, marsha} the assumption that $\Omega $ is of class $C^2$ is used also to guarantee
that the distance function $\delta$ is of class $C^2$ in a neighbourhood of the boundary, which in turn allows to use $\delta$ for the construction of
suitable   positive subsolutions and supersolutions of equation  (\ref{classiceq}). However,  the  tubular neighbourhood theorem does not hold  if $\Omega $ is of class  $C^{1,\gamma}$ with $0<\gamma <1$ and the distance function $\delta $  is not guaranteed to  be differentiable  near the boundary (the classical example is given by the parabolic open set $\Omega = \{(x,y)\in {\mathbb{R}}^2:\ y>|x|^{1+\gamma} \}$, in which case $\delta$ is not differentiable at all points $(0,y)$ of $\Omega$  close to $(0,0)$).

In the present paper, we prove that the existence and non-existence results in \cite{chwi, mamipi, marsha}  hold under the assumption that $\Omega$ is of class $C^{1,\gamma}$ with $\gamma \in (0,1]$, and we prove that the decay estimates for the minimizers  in \cite{mamipi, marsha} still hold.
Moreover, we provide decay and growth estimates  for the minimizers    also in the case of exterior domains  near the boundary and infinity.    Our approach  develops some ideas used in \cite{mamipi} for the  case $p=2$. In particular, we use the notion of {\em spectral gap} and {\em Agmon ground state},  and elaborate  the constructions of appropriate subsolutions  and supersolutions which replace those considered in \cite{mamipi, marsha}.  To do so, we first compute the so-called {\em Hardy constant at infinity}, i.e., the constant
\begin{multline}\label{lambdainfinity}
\lambda_{p,\infty }(\Omega )=\sup \left\{ \lambda \in {\mathbb{R}}\mid \exists  K\Subset \Omega\ {\rm and} \ u\in W^{1,p}_{\loc}(\Omega\setminus \bar K ) \ {\rm such \ that}    \right. \\  \left.  u>0 \
{\rm and }  -\Delta_pu - \frac{\lambda }{\delta^p}{\mathcal I}_pu \geq 0\  {\rm in }\  \Omega \setminus \bar K\right\},
\end{multline}
where we write $A \Subset O$ if $O$ is open, $\overline{A}$ is
compact and $\overline{A} \subset O$.

We prove that if $\Gw$ is a $C^{1,\gamma}$-domain with compact boundary, then $\lambda_{p,\infty}(\Omega)=c_p$ if $\Omega $
is bounded, and $\lambda_{p,\infty}(\Omega)=c_{p,n}$ if $\Omega $ is unbounded.

By a criterion   in  \cite[Lemma~4.6]{pin90}  (proved there only for the linear case),  it follows that  if $H_p(\Omega)<\lambda_{p,\infty }(\Omega )$, then the operator $-\Delta_p - \frac{H_p(\Omega) }{\delta^p} {\mathcal I}_p$ is critical in $\Gw$, which means that  it admits an Agmon ground state, i.e., a positive solution of equation  (\ref{classiceq}) in $\Gw$ which has minimal growth near $\partial \Gw$ and infinity.   We note that the quantity  $\lambda_{p,\infty }(\Omega ) -H_p(\Omega)$ is also referred to as the spectral gap, since  in the linear case ($p=2$),  $\lambda_{2,\infty }(\Omega )$ is the bottom of the essential spectrum of the operator $-\delta ^2 \Delta $, see \cite[\S~3]{mamipi}. Thus, the condition  $\lambda_{2,\infty }(\Omega ) -H_2(\Omega)>0$ implies that $H_2(\Omega )$ belongs to the discrete spectrum, and hence, it is an eigenvalue whose eigenfunction is the
 required minimizer.

The last step in the proof of the existence result, consists in proving that in the case of a spectral gap, the above mentioned Agmon ground state $u$ belongs to the space $\widetilde W^{1,p}(\Omega)$ and   this is done by constructing a supersolution $v$ to equation (\ref{classiceq}) which belongs to $L^p(\Omega ;\delta^{-p})$: indeed, $u$ being of minimal growth, it follows that $0<u\le C v $ near the boundary and  infinity, for some constant $C>0$, hence  $u\in L^p(\Omega ;\delta^{-p})$.

In a similar way, the non-existence of minimizers follows by a comparison principle proved in \cite{mamipi,marsha} combined with the construction of a suitable subsolution
 which does not belong to  $L^p(\Omega ;\delta^{-p})$.

  It is clear that one of the major ingredients of our arguments is the construction of  subsolutions and supersolutions  with the  appropriate growth  and this is used not only to provide the required estimates for the minimizers, but also for computing $\lambda_{p,\infty}(\Omega)$.
In \cite{mamipi, marsha} the construction of  subsolutions and supersolutions was done by using the so-called {\em Agmon trick}, namely, the
 subsolutions and supersolutions were given by functions of the type $\delta^{\alpha}+\delta ^{\beta}$ and  $\delta^{\alpha}-\delta ^{\beta}$, respectively, for suitable constants $\alpha, \beta >0$. As we have mentioned above, if $\Omega$ is of class $C^{1,\gamma}$ with $0< \gamma <1$ such functions
cannot be used. In this paper, we replace them by  functions of the form  $G^{\alpha}+G^{\beta }$ and $G^{\alpha }-G^{\beta }$,  where $G$ is a $p$-harmonic function
defined in a neighbourhood of the boundary and infinity. At infinity, the function $G$ is simply given by
$G(x)=|x|^\gb$ for an appropriate $\gb$.  Near the boundary of $\Omega$, the function $G$  can be  any positive $p$-harmonic function vanishing at $\partial \Omega$ (for example, one may consider the positive minimal Green function of the $p$-Laplacian, see Section~\ref{sec2} for details).  It is exactly at this point that the regularity  of $\Omega$ plays a crucial role. First, the assumption $\partial \Omega \in C^{1,\gamma}$  guarantees that the $p$-harmonic function $G$ is of class $C^{1,\tilde \gamma}$  up to  $\partial \Omega $ for some $\tilde \gamma \in (0, \gamma )$. Second, the same assumption  allows to use the Hopf lemma and to conclude that $\nabla G (x)\ne 0$ for all $x\in \partial \Omega $. The condition $\nabla G(x)\ne 0$ is
of fundamental importance for our analysis, since it allows  to control  the asymptotic behaviour of $\nabla G(x)/ G(x)$ as $x\to \partial \Omega$ in a precise way as it is explained in Lemma~\ref{general}.  We believe that Lemma~\ref{general} is of independent interest since it is proved without the use of the tubular neighbourhood theorem  and actually allows to bypass it.  We  note that the Hopf lemma  holds also if $\partial \Omega$ is of class $C^{1,{\rm Dini}}$ (see, \cite{miksha})
 and this allows to gain some generality as it is explained in Remark~\ref{remweak}.

Finally, we point out  that our results could be of  help in relaxing the boundary regularity assumptions of those statements in \cite{barlam, barlamisr}  the proofs of which require the existence of a minimizer for the variational problem (\ref{ray}).

The paper is organized as follows. In Section~\ref{sec2} we recall a number of notions concerning critical and subcritical operators and in particular, we reformulate  and generalize
the criterion \cite[Lemma~4.6]{pin90} in Lemma~\ref{pinchcriterion}. Section~\ref{sec3} is devoted to the construction of subsolutions and supersolutions, and in particular it contains the technical Lemma~\ref{general} which is applied to prove the key  lemmas~\ref{subsol} and \ref{subsolinf}.  In Section 4, we prove the
existence of minimizers and the corresponding decay and growth estimates, see theorems~\ref{mainbounded} and \ref{mainexterior}, and also Theorem~\ref{mainboundedweak} for a further relaxation of  the boundary conditions.  We conclude the paper in Section~\ref{sec5}, where we prove the lower estimates and the corresponding non-existence results, namely theorems~\ref{mainlower} and \ref{mainlower1}.
\section{Preliminaries}\label{sec2}
In this section we recall the notions of {\it positive minimal Green function} , {\it Agmon ground state}, and {\it subcritical and critical operators}. Moreover, we discuss a criterion for ensuring the
existence of  Agmon ground states for  equation (\ref{classiceq}). We refer to \cite{depi, defrpi, pin90, pinpsa,pintin} and references therein for details and proofs, and for  extensive discussions on this subject.

Fix $p\in ]1,\infty [$, and let $\Omega$ be a domain (i.e., an open connected set)  in ${\mathbb{R}}^n$,  where $n\geq 2$.  Let $V\in L^{\infty }_{\loc}(\Omega)$ (in fact, this assumption is not optimal, and we may assume that $V$ belongs to an appropriate local Morrey space, see \cite{pinpsa}). Consider the
operator
$$Q_V(u):=-\Delta_pu +V  |u|^{p-2}u,$$
and  the corresponding form ${\mathcal{Q}}$ defined by
$$
{\mathcal{Q}}_V(u,\varphi ):= \int_{\Omega }|\nabla u|^{p-2}\nabla u \cdot\nabla\varphi \dx +\int_{\Omega}V|u|^{p-2}u\varphi \dx,
$$
for all $u\in W^{1,p}_{\loc}(\Omega)$ and $\varphi\in C^{\infty}_c(\Omega)$.  As customary in the theory of quasilinear equations, we say that $u$ is a {\em (weak) solution} for  the equation $Q_V(v)=0$ in
$\Omega$ (or simply that $Q_V(u)=0$ in $\Omega$)   if $u\in W^{1,p}_{\loc}(\Omega )$ and $ {\mathcal {Q}}_V(u, \varphi )=0$ for all $\varphi \in C^{\infty }_c(\Omega)$. We also say that
$u$ is a {\em subsolution} (resp. {\em supersolution}) for the equation $Q_V(v)=0$ in $\Gw$ if $ {\mathcal {Q}}_V(u, \varphi )\le 0$ (resp. $ {\mathcal {Q}}_V(u, \varphi )\geq 0$)  for all $\varphi \in C^{\infty }_c(\Omega)$ with $\varphi \geq 0$; in these cases, we also simply write   $Q_V(u)\le 0$  (resp. $Q_V(u)\geq 0$) in $\Omega $.
Recall that by Allegretto-Piepenbrink theory \cite{pinpsa,pintin}, there exists a positive solution (or equivalently, a positive supersolution) for the equation $Q_V(u)=0$ in $\Gw$ if and only if $ {\mathcal {Q}}_V(\varphi , \varphi )\geq 0$ for all $\varphi \in C^{\infty }_c(\Omega)$, in which case the operator $Q_V$ is called non-negative in $\Gw$ (and we write $Q_V\geq 0$ in $\Gw$).
Obviously, the operator $Q_V$ is non-negative if $V\geq 0$.

We recall briefly some basic regularity results concerning solutions of the quasilinear equations appearing in the present paper. It is well known that if $V\in L_\loc^\infty(\Omega)$,  then solutions of the equation 
$$(-\Delta_p+V{\mathcal I}_p)u=0$$  
in $\Gw$ are in $C^{1,\alpha}(\Gw)$, and positive solutions satisfies the local Harnack inequality  (see for example \cite{gitr,HKM,MZ}).  Moreover, if the potential $V$ is bounded up to a $C^{1,\gg}$ portion of $\partial \Gw$, then solutions of the above equation are $C^{1,\alpha}$ up to this portion. Furthermore, the Hopf lemma holds for $p$-harmonic functions in $C^{1,\gg}$ domains and even under weaker assumptions (see Remark~\ref{remweak}).
\begin{defn}{\em Given a compact set $K$ contained in $\Omega$, we say that a positive solution $u$ to the equation $Q_V(u)=0$ in $\Omega \setminus K$ is a
{\em positive solution of minimal growth in a neighbourhood of infinity in $\Omega $} (briefly $u\in {\mathcal{M}}^{Q_V}_{\Omega , K}$) if for any bounded smooth  open set ${\mathcal {K}}$  with $K\subset {\mathcal{K}}\Subset \Omega  $ and any positive supersolution $v$ to the equation $Q_V(u)=0$ in $\Omega \setminus \bar {\mathcal{K}}$ we have that the condition $u\le v$ on $\partial {\mathcal{K}}$ implies that $u\le v$ on $\Omega \setminus \bar {\mathcal {K}}$.}
\end{defn}
We have the following theorem which includes the definitions of the notions mentioned above.
\begin{thm}[\cite{frpi,pinpsa,pintin}]\label{greenagmon} Assume that the operator $Q_V$ is non-negative in $\Omega$ and   fix $x_0\in \Omega $. Then there exists a  solution $u\in {\mathcal{M}}^{Q_V}_{\Omega , \{x_0\}}$  of $Q_V(u)=0$, and it  is unique up to a multiplicative constant.
Moreover, the following alternative holds:
\begin{itemize}
\item[(A)] either  $u$ has a singularity at the point $x_0$ with the following asymptotic behaviour
\begin{equation}
u(x)\sim \left\{
\begin{array}{ll}
|x-x_0|^{\frac{p-n}{p-1}} & {\rm if}\ 1<p<n,\\
-\log |x-x_0|& {\rm if}\ p=n,\\
1 & {\rm }\ p>n\, ,
\end{array}
  \right.
\end{equation}
 as $x\to x_0$, in which case $u$ is called a {\em positive minimal Green function} with pole at $x_0$  for $Q_V$ in $\Gw$, and the operator $Q_V$ is called {\em subcritical} in $\Gw$.

 \item[(B)] or $u$ is a global positive solution of the equation  $Q_V(v)=0$ in $\Omega $, in which case $u$ is called {\em Agmon ground state} for $Q_V$ and the operator $Q_V$ is called {\em critical}  in $\Gw$.
 \end{itemize}
 \end{thm}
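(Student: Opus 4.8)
The plan is to establish Theorem~\ref{greenagmon} by combining the Allegretto--Piepenbrink hypothesis $Q_V\geq 0$ with an exhaustion argument that produces the minimal positive solution near $x_0$, and then to analyze the dichotomy by a ``thinning'' argument at the pole. Concretely, I would first fix a smooth exhaustion $\Omega_1\Subset\Omega_2\Subset\cdots$ of $\Omega$ with $x_0\in\Omega_1$, and for each $k$ consider the Dirichlet problem for $Q_V$ on the annular domain $\Omega_k\setminus\overline{B_{1/k}(x_0)}$ with boundary data $0$ on $\partial\Omega_k$ and a fixed positive constant (or a fixed small Green-function-type profile) on $\partial B_{1/k}(x_0)$; existence of a nonnegative solution $u_k$ follows from the standard variational/monotonicity theory for the $p$-Laplacian (the associated energy functional is coercive and weakly lower semicontinuous on $W^{1,p}_0$ once the appropriate normalization is in place), and $u_k>0$ in the interior by the Harnack inequality and the strong maximum principle. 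Normalizing $u_k(x_1)=1$ at some reference point $x_1\neq x_0$, the local Harnack inequality gives uniform local bounds away from $x_0$, and the $C^{1,\alpha}_{\loc}$ estimates let us pass to a locally uniform limit $u$ solving $Q_V(u)=0$ in $\Omega\setminus\{x_0\}$, with $u>0$ by Harnack. That $u\in\mathcal{M}^{Q_V}_{\Omega,\{x_0\}}$ is a standard consequence of the construction: if $v$ is a positive supersolution in $\Omega\setminus\overline{\mathcal K}$ with $u\leq v$ on $\partial\mathcal K$, then the weak comparison principle for $Q_V$ (valid since $Q_V\geq 0$, and available in the references \cite{pinpsa,pintin}) applied on each $\Omega_k\setminus\overline{\mathcal K}$, together with $u_k=0\leq v$ on $\partial\Omega_k$, forces $u_k\leq v$ there, and hence $u\leq v$ in the limit.

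For uniqueness up to a multiplicative constant, I would argue that any two positive solutions of minimal growth $u,\tilde u\in\mathcal{M}^{Q_V}_{\Omega,\{x_0\}}$ must be comparable: by definition, testing $u$ against the supersolution $c\tilde u$ on a small sphere $\partial B_r(x_0)$ for suitable $c$ gives $u\leq c\tilde u$ near infinity/near $\partial\Omega$, and symmetrically; then the set where $u/\tilde u$ attains its supremum is both open and closed in $\Omega\setminus\{x_0\}$ by a Harnack-type / strong comparison argument, hence $u=c\tilde u$ identically. (This is the standard uniqueness statement for positive solutions of minimal growth; since the excerpt allows us to invoke \cite{frpi,pinpsa,pintin}, I would cite the precise statement rather than reprove it.)

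The dichotomy (A)/(B) is then decided by whether $u$ extends across $x_0$. I would distinguish cases by the growth of $u$ as $x\to x_0$. Using the known local behavior of $p$-harmonic-type functions near an isolated singularity and a comparison with the fundamental solution $\mu_p(x)$ of $-\Delta_p$ (which is $|x-x_0|^{(p-n)/(p-1)}$ for $p<n$, $-\log|x-x_0|$ for $p=n$, constant for $p>n$) — valid locally because $V$ is bounded near $x_0$ and hence negligible against the singular principal part — one shows that either $u$ is bounded near $x_0$, in which case the removable-singularity theorem for quasilinear equations with a point (capacity-zero) singularity lets us extend $u$ to a global positive solution, giving alternative (B); or $u$ is unbounded near $x_0$, in which case the same comparison pins its asymptotics to that of $\mu_p$, giving the stated equivalence $u(x)\sim\mu_p(x)$ and alternative (A). In the subcritical case the resulting $u$ is by construction of minimal growth and singular at $x_0$, so it deserves the name ``minimal positive Green function''; in the critical case the global $u$ is the Agmon ground state. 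The main obstacle, and the place where one must be careful, is the last step: controlling the precise asymptotics at the pole in the quasilinear setting (where there is no representation formula) requires a delicate two-sided comparison with the fundamental solution, using the $C^{1,\alpha}$ regularity, Harnack, and a scaling/barrier argument to absorb the lower-order term $V|u|^{p-2}u$; I would lean on the cited literature \cite{frpi,pinpsa,pintin} for this rather than redoing it, since the theorem as stated is explicitly attributed there.
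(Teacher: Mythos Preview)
The paper does not prove Theorem~\ref{greenagmon}; it is stated as a known result taken from \cite{frpi,pinpsa,pintin} (note the bracketed citation in the theorem header), and is used purely as background in Section~\ref{sec2}. So there is no ``paper's own proof'' to compare against.

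Your sketch is a reasonable outline of how these results are established in the cited literature, and the broad architecture (exhaustion, Harnack compactness, comparison with barriers, removable-singularity dichotomy) is correct. One point deserves a caveat: your uniqueness argument via ``the set where $u/\tilde u$ attains its supremum is open and closed'' is essentially a linear strong-maximum-principle argument and does not transfer directly to the quasilinear $p$-Laplacian, where the quotient of two solutions satisfies no useful equation and the strong comparison principle can fail at critical points of the solutions. The actual proofs in \cite{pintin,pinpsa} proceed differently, typically via a Picone-type identity or a convexity/sub--supersolution argument tailored to the $p$-Laplacian structure. Since you already flag that you would cite the references for this step, the proposal is acceptable as a roadmap, but be aware that this is the genuinely nontrivial ingredient, not a routine detail.
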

Let $\Gw'$ be a subdomain of a domain $\Gw$ such that $\overline{\Gw'}\subset \Gw$. If $Q_{V}$ is non-negative in $\Gw$, then $Q_{V}$ is subcritical in $\Gw'$ \cite{pintin}.
Therefore,  if $\Omega$ is a domain with nonempty compact boundary and $\partial \Omega$ is sufficiently regular, then the $p$-Laplacian  (that is, $Q_V$ with $V=0$) is subcritical in $\Omega$, and hence, the corresponding function $u \in  {\mathcal{M}}^{Q_V}_{\Omega , \{x_0\}}$ is a positive minimal Green function.  Such a minimal Green function $G$ provides us with a positive $p$-harmonic function defined in a relative neighbourhood of $\partial \Omega$ which will be used in the sequel.
Importantly, if $\partial \Omega $ is of class $C^{1,\gamma}$ with $0<\gamma <1$, then $G$ is $C^{1,\ga}$ up to the boundary, $G(x)=0$  and $\nabla G(x)\ne 0$ for all $x\in \partial \Omega$ since the Hopf lemma
holds, see \cite{miksha}.

 In the case of linear elliptic equations, it was  stated and proved in   \cite[Lemma~4.6]{pin90} that the existence of a spectral gap implies the existence of an Agmon ground state.  The statement and  the proof in \cite{pin90} can be adapted to our case and for the convenience of the reader we indicate here how to do it.
 \begin{lem}\label{pinchcriterion} Let $\Omega $ be a domain in ${\mathbb{R}}^n$ such that the $L^p$ Hardy inequality holds, and let $V:=-H_p(\Omega)/\delta^p$. If the operator $Q_V$ has  a spectral gap, i.e.,  $H_p(\Omega)< \lambda_{p,\infty}(\Omega)$,  then  $Q_V$   is critical.
 \end{lem}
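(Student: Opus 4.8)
Since the $L^p$ Hardy inequality holds in $\Omega$ with optimal constant $H_p(\Omega)$, the functional $\mathcal{Q}_V(\varphi,\varphi)=\int_\Omega|\nabla\varphi|^p\dx-H_p(\Omega)\int_\Omega|\varphi|^p\delta^{-p}\dx$ is nonnegative on $C_c^\infty(\Omega)$, so $Q_V\ge 0$ and, by Theorem~\ref{greenagmon}, $Q_V$ is either subcritical or critical. The plan is to adapt the minimizing-sequence argument of \cite[Lemma~4.6]{pin90} and exhibit a \emph{null sequence} for $Q_V$: by the criticality theory for functionals of $p$-Laplacian type \cite{pinpsa,pintin}, a nonnegative functional admitting a null sequence is critical, which will settle the lemma. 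The spectral gap enters through the following consequence of \eqref{lambdainfinity}: fixing $\lambda$ with $H_p(\Omega)<\lambda<\lambda_{p,\infty}(\Omega)$, there are $K\Subset\Omega$ and a positive $w\in W^{1,p}_\loc(\Omega\setminus\bar K)$ with $-\Delta_pw-\lambda\delta^{-p}\mathcal{I}_pw\ge 0$ in $\Omega\setminus\bar K$ (any $\lambda<\lambda_{p,\infty}(\Omega)$ admits such a supersolution, by monotonicity of the condition in $\lambda$ and the definition of the supremum); by Allegretto--Piepenbrink theory applied in the domain $\Omega\setminus\bar K$ this yields the improved inequality $\int_{\Omega\setminus\bar K}|\nabla\varphi|^p\dx\ge\lambda\int_{\Omega\setminus\bar K}|\varphi|^p\delta^{-p}\dx$ for all $\varphi\in C_c^\infty(\Omega\setminus\bar K)$, and hence, by density, for all $\varphi\in W^{1,p}_{c}(\Omega\setminus\bar K)$.

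Next I would take a minimizing sequence $\{\psi_k\}\subset C_c^\infty(\Omega)$ for the Rayleigh quotient \eqref{ray}, normalized by $\int_\Omega|\psi_k|^p\delta^{-p}\dx=1$ with $\int_\Omega|\nabla\psi_k|^p\dx\to H_p(\Omega)$, so that $\mathcal{Q}_V(\psi_k,\psi_k)\to 0$; replacing $\psi_k$ by $|\psi_k|\in W^{1,p}_c(\Omega)$ we may assume $\psi_k\ge 0$. Since $\int_{K_1}|\psi_k|^p\dx\le C(K_1)$ for every $K_1\Subset\Omega$ and $\int_\Omega|\nabla\psi_k|^p\dx$ is bounded, along a subsequence $\psi_k$ converges to some $\psi\ge 0$ weakly in $W^{1,p}_\loc(\Omega)$ and strongly in $L^p_\loc(\Omega)$. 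The crucial point is that $\psi\not\equiv 0$. Suppose $\psi\equiv 0$; fix $\chi\in C_c^\infty(\Omega)$ with $0\le\chi\le 1$, $\chi\equiv 1$ on a neighbourhood of $\bar K$, and apply the improved inequality to $(1-\chi)\psi_k\in W^{1,p}_c(\Omega\setminus\bar K)$. Writing $\nabla((1-\chi)\psi_k)=(1-\chi)\nabla\psi_k-\psi_k\nabla\chi$, using $0\le 1-\chi\le 1$, the triangle inequality in $L^p$, and $\psi_k\to 0$ in $L^p$ on the compact set $\mathrm{supp}\,\nabla\chi$, one gets $\int_\Omega|\nabla((1-\chi)\psi_k)|^p\dx\le H_p(\Omega)+o(1)$; and since $\delta$ is bounded below on $\mathrm{supp}\,\chi$, $\int_\Omega|(1-\chi)\psi_k|^p\delta^{-p}\dx\ge 1-\sup_{\mathrm{supp}\,\chi}\delta^{-p}\int_{\mathrm{supp}\,\chi}|\psi_k|^p\dx=1-o(1)$. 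Combining these with the improved inequality gives $H_p(\Omega)+o(1)\ge\lambda(1-o(1))$, and letting $k\to\infty$ we obtain $H_p(\Omega)\ge\lambda$, contradicting $\lambda>H_p(\Omega)$. Hence $\psi\not\equiv 0$.

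Finally, since $\psi\not\equiv 0$ and $\psi_k\to\psi$ in $L^p_\loc(\Omega)$, there is a ball $B\Subset\Omega$ with $\int_B|\psi_k|^p\dx\to\int_B|\psi|^p\dx>0$, so $\|\psi_k\|_{L^p(B)}$ is bounded above and below; replacing $\psi_k$ by $\psi_k/\|\psi_k\|_{L^p(B)}$ multiplies $\mathcal{Q}_V(\psi_k,\psi_k)$ by a bounded factor and hence still gives $\mathcal{Q}_V(\psi_k,\psi_k)\to 0$, with $\|\psi_k\|_{L^p(B)}=1$. Thus $\{\psi_k\}$ is a null sequence for the nonnegative functional $Q_V$, and therefore $Q_V$ is critical. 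I expect the main obstacle to be the rigorous invocation of the quasilinear criticality machinery in the last step — namely, the facts that a nonnegative functional of $p$-Laplacian type admitting a null sequence is critical and that the associated ground state is positive (Harnack) and unique up to scalars — all of which are available in \cite{pinpsa,pintin}; a minor but genuine technical point is the treatment of the cross terms in the non-concentration estimate when $p\ne 2$, where the elementary $L^p$ inequalities used above replace the exact orthogonality that makes the argument of \cite{pin90} transparent in the linear case.
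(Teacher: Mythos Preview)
Your argument is correct and takes a genuinely different route from the paper's. The paper adapts the multiparameter convexity argument of \cite{pin90}: it picks $\lambda_1\in\,]H_p(\Omega),\lambda_{p,\infty}(\Omega)[$, constructs a nonnegative compactly supported potential $\mathcal{V}$ such that $Q_{-\lambda_1\delta^{-p}+\mathcal{V}}\ge 0$ in $\Omega$ (by extending a positive solution of $Q_{-\lambda_1\delta^{-p}}(u)=0$ in $\Omega\setminus\bar K$ across $K$ via a careful $C^2$ truncation and setting $\mathcal{V}:=|Q_{-\lambda_1\delta^{-p}}(\bar v)|/\bar v^{\,p-1}$), and then studies the convex function $\nu(t):=\min\{s:\ Q_{-\lambda_t\delta^{-p}+s\mathcal{V}}\ge 0\}$ along the segment $\lambda_t=t\lambda_1+(1-t)H_p(\Omega)$. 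Since $\mathcal{V}$ has compact support, each operator $Q_{-\lambda_t\delta^{-p}+\nu(t)\mathcal{V}}$ is critical; convexity together with $\nu(0)\le 0<\nu(t)$ for $t>0$ forces $\nu(0)=0$, hence $Q_V$ is critical.

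Your approach is instead a direct concentration--compactness argument at the level of a minimizing sequence: the spectral gap, via Allegretto--Piepenbrink on $\Omega\setminus\bar K$, rules out escape of mass to the ideal boundary, so the weak $W^{1,p}_{\loc}$ limit is nonzero, and the renormalized sequence is a null sequence in the sense of \cite{pintin,pinpsa}. This is arguably more elementary and more transparent once the null-sequence characterization of criticality is granted; it also avoids the somewhat delicate construction of the extended supersolution $\bar v$ and the potential $\mathcal{V}$. The paper's approach, on the other hand, is more structural: it situates the criticality of $Q_V$ within a two-parameter family of operators and uses only convexity and the compact-perturbation criticality criterion, without any concentration analysis. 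Both proofs ultimately rest on the quasilinear criticality machinery of \cite{pinpsa,pintin}.
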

 \begin{proof}   Following   \cite[Lemma~4.6]{pin90}, we set
 $$S\!:=\!\{t\!\in\! {\mathbb{R}}\!\mid \! Q_{-t\delta^{-p}}\!\geq\! 0 \mbox{ in }\Gw \},\   S_{\infty}\!:=\!\{  t\!\in\! {\mathbb{R}}\!\mid\!  Q_{-t\delta^{-p}}\!\geq\! 0 \mbox{ in }  \Omega\! \setminus \!\bar K \mbox{ for some } K\!\Subset \Omega  \}.$$
Clearly, $S$ and $S_\infty$ are intervals, and since $Q_V$ has  a spectral gap, it follows that
$$S=\ ]\!-\infty , H_p(\Omega )] \ \varsubsetneq \ S_{\infty} \subseteq \ ]\!-\infty , \lambda_{\infty ,p}(\Omega)].$$
 For simplicity, we set $\lambda_0=H_p(\Omega )$.
Let $\lambda _1 \in S_{\infty }\setminus S$. 

{\bf Claim} There exists a nonzero non-negative potential $\mathcal{V}\in L^\infty(\Gw)$ with compact support in $\Omega$ such that $Q_{-\lambda_1\delta^{-p}+\mathcal{V} }\geq 0$ in $\Gw$.

Since $\gl_0<\gl_1<\gl_{\infty,p}$, there exists a smooth open  set $K_0\Subset  \Gw$ such that the equation  $Q_{-\lambda_1\delta^{-p}}(u)=0$ in $\Gw\setminus \bar{K_0}$ admits a positive solution. 

Fix a smooth open  set $K$ satisfying  $K_0\Subset K\Subset  \Gw$. We first show that there exists a positive solution $v$ of the equation  $Q_{-\lambda_1\delta^{-p}}(u)=0$ in $\Gw\setminus \bar{K}$ satisfying $v=0$ on $\partial K$.

To this end,  consider  a smooth exhaustion $\{\Gw_i\}_{i\in\N}$ of $\Gw$ by smooth relatively compact subdomains such that $x_0\in\Gw_1\setminus \bar{K}$    and such that 
$\bar{K}\subset \Omega_{i-1}\Subset \Omega_i $ for all $i>1$.  Let $v_{i}$ be the unique positive solution of the Dirichlet problem 
\[
 \left\{
 \begin{array}{ll}
   Q_{-\lambda_1\delta^{-p}}(u)=f_i & \mbox{in }\Gw_i\setminus \bar{K},
    \\[8pt]
     u=0 & \mbox{on }\partial (\Gw_i\setminus \bar{K}),
 \end{array}
 \right.
\]
where $f_i$ is  a nonzero nonnegative function in $C_c^\infty(\Gw_i\setminus \overline{\Gw_{i-1} })$      normalized in such a way that  $v_i(x_0)=1$. The existence and uniqueness of such a solution is guaranteed by \cite[Theorem~3.10]{pinpsa}  combined with the fact that $Q_{-\lambda_1\delta^{-p}}(u)=0$  admits a positive solution in $\Gw\setminus \bar{K_0}$. 

By the Harnack principle and elliptic regularity (see for example, \cite{pinpsa}) the sequence $\{v_i\}_{i\in\N}$ admits a subsequence converging locally uniformly to a positive solution $v$ of the equation  $Q_{-\lambda_1\delta^{-p}}(u)=0$ in $\Gw\setminus \bar{K}$ satisfying $v=0$ on $\partial K$.
Note that by classical regularity theory we  have that $v$ is of class $C^{1,\alpha }$ up to $\partial K$.    

Let  $K_1$  be an open set such that  $K\Subset K_1\Subset  \Gw$ and let 
$\min_{x\in \partial K_1} v(x)=m>0$. 
Let $\varepsilon>0$ be fixed in such a way that  $8\varepsilon < m$.  Let  $F$ be a $C^{2}$  function from $[0, +\infty [$ to $[0,+\infty [$ such that 
$F(t)=\varepsilon$ for all $0\le t \le 2\varepsilon $ and $F(t)= t$ for all $t\geq  4\varepsilon $  and such that $F'(t)\ne 0$ for all $t>2\varepsilon $. Assume also that 
$|F'(t)|^{p-2}F''(t)\to 0$ as $t\to 2\varepsilon$, hence the function $t\to |F'(t)|^{p-2}F''(t)$  (defined identically equal to zero on $ [0,2\varepsilon]$)  is continuous  on $[0, +\infty [$ (for this purpose, it is enough  for example that $F$ is chosen to be of the type $\varepsilon + (t-2\varepsilon)^{\beta }$ for all $t>2\varepsilon$ sufficiently close to $2\varepsilon$ and $\beta >\max\{p/(p-1),2\}$).        We set $\bar v(x) = F( v(x))$ for all   $x\in K_1\cap (\Omega \setminus {\bar  K})$.  By definition, it follows that there exists an open  neighborhood $U$ of $\partial K$ such that $\bar v (x) =\varepsilon $ for all $x\in U \cap (\Omega \setminus {\bar  K})$, and there exists an open neighborhood  $U_1$ of $\partial K_1$ such that $\bar v (x)=v(x)$ for all $x\in U_1\cap K_1$ . 
Thus $\bar v (x) $ can be extended continuously into the whole of $\Omega $ by setting $\bar v (x)=\varepsilon $ for all $x\in \bar K$  and $\bar v(x)= v(x)$ for all $x\in \Omega \setminus K_1$. 
 By   \cite[Lemma~2.10]{depi}  we have that 
 \begin{equation}
\label{superposition} 
-\Delta_p \bar v (x) =-|F'(v(x))|^{p-2}[(p-1)F''(v(x))|\nabla v(x)|^p+F'(v(x))\Delta _pv(x)    ] 
 \end{equation}
 for all $x\in K_1 \setminus \bar K$. 
By our assumptions on $F$ and $v$, it follows that $-\Delta_p \bar v (x) $ is a continuous function which  vanishes  
on $ U \cap (\Omega \setminus { \bar K})$ and equals $\lambda_1\delta^{-p} v$ in $U_1\cap K_1$. In particular, it makes sense to compute   $\Delta_p\bar v$ in  $\Omega $, and it turns out that 
 $\Delta_p\bar v=0 $  in $K$ and  $-\Delta_p\bar v= \lambda_1\delta^{-p} v$ in $\Omega\setminus \bar K_1$. 
 We can now define the potential ${\mathcal{V}}$ by setting
 $$
{\mathcal  {V}}= \frac{|Q_{-\lambda_1\delta^{-p}}(\bar v)   | }{\bar v ^{p-1}}.
 $$
  By construction, ${\mathcal  {V}}$ is a bounded function with compact support in $\Omega $ and 
 $\bar v$ is a positive supersolution of the equation $Q_{-\lambda_1\delta^{-p}+\mathcal{V}}(u)=0$ in $\Gw$. Hence,   $Q_{-\lambda_1\delta^{-p}+\mathcal{V} }\geq 0$ in $\Gw$,  and the Claim is proved.

   We set $\lambda_t=t\lambda_1+(1-t)\lambda_0$. By using \cite[Lemma~4.3]{depi} (see also \cite[Proposition~4.3]{pintin}), it follows that the set
   $$\{(t,s)\in [0,1]\times {\mathbb{R}}:\  Q_{-\lambda_t\delta^{-p}+s\mathcal{V} }\geq 0 \mbox{ in } \Gw\}$$  is a convex set. Hence, the function $\nu:[0,1]\to \R$  defined by
 $$\nu (t):=\min \{s\in {\mathbb{R}}:\  Q_{-\lambda_t\delta^{-p}+s\mathcal{V} }\geq 0 \mbox{ in } \Gw \}$$ is convex . Since $\mathcal{V}$ has compact support
 it follows by \cite[Proposition~4.19]{pinpsa} that $Q_{-\lambda_t\delta^{-p}+\nu (t)\mathcal{V} }$ is critical for all $t\in [0,1]$. We note that by definition $\nu (t)>0$ for all $t\in ]0,1]$, while
 $\nu (0)\le 0$. Since $\nu $ is convex, we must have $\nu (0)=0$, and hence $Q_{-\lambda_0\delta^{-p} }$ is critical. \end{proof}
\section{Construction of subsolutions and supersolutions}\label{sec3}
The proofs of our main theorems  are based on the  construction of   suitable subsolutions and supersolutions to equations of the form $-\Delta_pu-\lambda \delta^{-p} |u|^{p-2}u=0$,  which is carried out in this section. To do so, we need a number of preliminary results.

Recall our notation $ {\mathcal{I}}_p u = |u|^{p-2}u$.  By ${\mathbb{R}}^n\cup \{\infty \}$  we denote the standard  one-point compactification of ${\mathbb{R}}^n$ (note that in this paper the symbol $\infty$ will not be used with reference to  the one point compactification of a bounded domain $\Omega$, as often is done in the related literature). Finally, for  $\alpha \in [0,1]$  we  set
\begin{equation}\label{lambdaalpha}
\lambda_{\alpha}: = (p-1)\alpha ^{p-1}(1-\alpha).
\end{equation}
Observe that $\lambda_{\alpha }=c_p$ if $\alpha =(p-1)/p$, the function $\lambda_{\alpha }$ is increasing
with respect to  $\alpha \in[0, (p-1)/p]$ and decreasing for $\alpha \in [(p-1)/p,1]$.

The first part of the following lemma is taken from  \cite[Proposition~4.5]{depi}.
\begin{lem}\label{devpinlem}  Let  $U$ be an open set in ${\mathbb{R}}^n$. Let  $G$ be  a positive  function  defined on  $U$ such that  $-\Delta_p G=0$ in $U$. Let
$W:=| \nabla G/ G |^p $. Then for every $\alpha \in (0,1)$ we have
\begin{equation}\label{devpinlem0}
(-\Delta_p-\lambda_{\alpha} W{\mathcal I}_p)G^{\alpha}=0,\ \ \  {\rm in}\ U.
\end{equation}

 Moreover, if $x_0\in \overline{ U}$, where the closure $\overline { U}$ of $U$ is taken in  ${\mathbb{R}}^n\cup \{\infty \}$,  and
 \begin{equation}\label{devpinlem1}
 \lim_{x\to x_0} \frac{| \nabla G(x)|}{ G(x)} d (x)=c
 \end{equation}
for some $c>0$,  where $d$ is a positive function defined in a relative punctured neighborhood of $x_0$. 
Then for every $\varepsilon >0$ there exists an open neighbourhood $U_{\varepsilon }$ of $x_0$ such that
\begin{equation}\label{asymlem1}
\biggl(-\Delta_p-\frac{c^p\lambda_{\alpha} -\varepsilon}{d^p}{\mathcal I}_p  \biggr)G^{\alpha}\geq 0\ \ \  {\rm in}\ \ \  (U_{\varepsilon}\cap U)\setminus \{x_0\}.
\end{equation}
\end{lem}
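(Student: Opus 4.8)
\emph{Approach.} The first identity (\ref{devpinlem0}) is \cite[Proposition~4.5]{depi}, and I would just quote it; it is checked directly: from $\nabla G^{\alpha}=\alpha G^{\alpha-1}\nabla G$ one gets $|\nabla G^{\alpha}|^{p-2}\nabla G^{\alpha}=\alpha^{p-1}G^{(\alpha-1)(p-1)}|\nabla G|^{p-2}\nabla G$, and then, using $\Delta_p G=0$ together with $|\nabla G|^{p-2}\nabla G\cdot\nabla G=|\nabla G|^{p}$,
\[
-\Delta_p G^{\alpha}=(p-1)\alpha^{p-1}(1-\alpha)\,G^{\alpha(p-1)-p}\,|\nabla G|^{p}=\lambda_{\alpha}\left(\frac{|\nabla G|}{G}\right)^{p}\mathcal{I}_p G^{\alpha}=\lambda_{\alpha}W\,\mathcal{I}_p G^{\alpha}
\]
weakly in $U$; the identity persists across $\{\nabla G=0\}$, where both sides vanish a.e.

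For the second assertion, fix $\alpha\in(0,1)$, so that $\lambda_{\alpha}=(p-1)\alpha^{p-1}(1-\alpha)>0$, and let $\varepsilon>0$. Choose an open neighbourhood $U_{0}$ of $x_{0}$ in ${\mathbb{R}}^{n}\cup\{\infty\}$ on which $d$ is defined and positive, and for an open set $U_{\varepsilon}\subseteq U_{0}$ to be fixed later put $\Omega'_{\varepsilon}:=(U_{\varepsilon}\cap U)\setminus\{x_{0}\}$. This is an open subset of $U$, and every compact subset of it is separated from $x_{0}$, hence (when $x_{0}\in\partial U$) from $\partial U$, so that $G$ is bounded away from $0$ on such compacta and $G^{\alpha}\in W^{1,p}_{\loc}(\Omega'_{\varepsilon})$. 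Put $a:=(c^{p}\lambda_{\alpha}-\varepsilon)\,d^{-p}$. Testing (\ref{devpinlem0}) against $\varphi\in C^{\infty}_{c}(\Omega'_{\varepsilon})$ with $\varphi\geq 0$ gives
\[
{\mathcal{Q}}_{-a}(G^{\alpha},\varphi)=\int_{\Omega'_{\varepsilon}}|\nabla G^{\alpha}|^{p-2}\nabla G^{\alpha}\cdot\nabla\varphi\dx-\int_{\Omega'_{\varepsilon}}a\,\mathcal{I}_p G^{\alpha}\,\varphi\dx=\int_{\Omega'_{\varepsilon}}\bigl(\lambda_{\alpha}W-a\bigr)\,\mathcal{I}_p G^{\alpha}\,\varphi\dx,
\]
and since $\mathcal{I}_p G^{\alpha}=G^{\alpha(p-1)}>0$, the inequality (\ref{asymlem1}), i.e., $Q_{-a}(G^{\alpha})\geq 0$ in $\Omega'_{\varepsilon}$, will hold provided $\lambda_{\alpha}W\geq a$ a.e.\ in $\Omega'_{\varepsilon}$, equivalently (since $Wd^{p}=(|\nabla G|\,d/G)^{p}$) provided $\lambda_{\alpha}\,(|\nabla G|\,d/G)^{p}\geq c^{p}\lambda_{\alpha}-\varepsilon$ there. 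Now by the hypothesis (\ref{devpinlem1}) we have $(|\nabla G(x)|\,d(x)/G(x))^{p}\to c^{p}$ as $x\to x_{0}$, whence $\lambda_{\alpha}\,(|\nabla G|\,d/G)^{p}\to c^{p}\lambda_{\alpha}>c^{p}\lambda_{\alpha}-\varepsilon$, so there is an open neighbourhood $U_{\varepsilon}\subseteq U_{0}$ of $x_{0}$ on which the required inequality holds; this proves (\ref{asymlem1}). The case $x_{0}=\infty$ is handled verbatim, the neighbourhoods being understood in ${\mathbb{R}}^{n}\cup\{\infty\}$.

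I do not foresee a genuine obstacle: once (\ref{devpinlem0}) is available the argument is elementary, and the only delicate points are routine bookkeeping, namely that the reduction of the operator inequality to the pointwise inequality $\lambda_{\alpha}W\geq a$ is legitimate because $\mathcal{I}_p G^{\alpha}$ is positive and locally bounded above and away from zero on $\Omega'_{\varepsilon}$, and that $G^{\alpha}$ and the potential $-a$ lie in the admissible local classes on $\Omega'_{\varepsilon}$, which holds since compacta in $\Omega'_{\varepsilon}$ avoid $x_{0}$ (and in every application of the lemma $d$ equals $\delta$ or $|\cdot|$, so that $a\in L^{\infty}_{\loc}(\Omega'_{\varepsilon})$). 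The real work lies not in this lemma but in verifying the asymptotics (\ref{devpinlem1}) for the concrete $p$-harmonic functions $G$ used later, where the $C^{1,\gamma}$-regularity of $\partial\Omega$, the $C^{1,\tilde\gamma}$-regularity of $G$ up to $\partial\Omega$, and the Hopf lemma ($\nabla G\neq 0$ on $\partial\Omega$) come into play; that is the content of Lemma~\ref{general}.
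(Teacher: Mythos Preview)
Your proof is correct and follows essentially the same route as the paper's: both cite \cite[Proposition~4.5]{depi} for (\ref{devpinlem0}) and then reduce (\ref{asymlem1}) to the pointwise inequality $\lambda_{\alpha}Wd^{p}\geq c^{p}\lambda_{\alpha}-\varepsilon$, which holds near $x_{0}$ by the limit (\ref{devpinlem1}). The paper presents the same computation slightly more compactly as the algebraic identity $\bigl(-\Delta_p-\frac{c^p\lambda_{\alpha}-\varepsilon}{d^p}\mathcal{I}_p\bigr)G^{\alpha}=\bigl(\lambda_{\alpha}(Wd^{p}-c^{p})+\varepsilon\bigr)\frac{\mathcal{I}_p G^{\alpha}}{d^{p}}$, but the content is identical.
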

\begin{proof} For the proof of (\ref{devpinlem0}) we refer to \cite[Proposition~4.5]{depi}. In order to prove (\ref{asymlem1}) we note that
\begin{eqnarray}\lefteqn{
\biggl(-\Delta_p-\frac{c^p\lambda_{\alpha} -\varepsilon }{d^p}{\mathcal I}_p \biggr)G^{\alpha}=\biggl(-\Delta_p-\lambda_{\alpha } W {\mathcal I}_p  \biggr)G^{\alpha}}\nonumber \\  & &
+c^p \lambda_{\alpha } \left(\frac{ W}{c^p}-\frac{1}{d^p}  \right){\mathcal I}_p G^{\alpha}+\varepsilon \frac{{\mathcal I}_pG^{\alpha}}{d^p} =\left(\lambda_{\alpha} (Wd^p-c^p) +\varepsilon \right  )  \frac{{\mathcal I}_p
G^{\alpha}}{d^p} .
\label{asymlem2}\end{eqnarray}

By (\ref{devpinlem1}), it follows that there exists an open neighbourhood $U_{\varepsilon}$ of $x_0$ such that
$\lambda_{\alpha }(W(x)d (x)^p -c^p)  \geq -\varepsilon $ for all $x\in (U_{\varepsilon}\cap U)\setminus \{x_0\}$  which combined with (\ref{asymlem2}) yields (\ref{asymlem1}).  \end{proof}
The proof of the following lemma would be straightforward for open sets $\Omega$ of class $C^2$, in which case the tubular neighbourhood theorem holds and no boundary point can be approached by points from the cut locus of $\Omega$. However, assuming that $\Omega$ is of class $C^{1,\gamma}$ with  $0<\gamma <1$, or even just of class $C^1$ as we do here, requires a more detailed analysis.

As usual, by modulus of continuity of a real or vector-valued function $f$ defined on a subset $A$ of ${\mathbb{R}}^n$ we mean an increasing  function $\omega :[0,\infty [\to [0,\infty[$  such that $w(t)\to 0$ as $t\to 0$
and such that $|f(x)-f(y)|\le \omega (|x-y|)$ for all $x,y\in A$.
\begin{lem}\label{general} Let $\Omega$ be an open set in  $\R^n$ of class $C^1$, $x_0\in\partial\Omega$ and $U$ be an open neighbourhood of $x_0$. Let $G\in C^1(\overline {\Omega \cap U})$ be a non-negative function such that
$G(x)=0$, $\nabla G (x)\ne 0$ for all $x\in U\cap \partial \Omega$. Then
\begin{equation}\label{general1}
\lim_{x\to x_0}\frac{|\nabla G(x)|}{G(x)} \delta (x)=1.
\end{equation}
Moreover, if $\omega$ is a  modulus of continuity of $\nabla G$ in a neighbourhood of $x_0$, then
\begin{equation}\label{general1,5}
\left| \frac{\nabla G(x)}{G(x)}\right|=\frac{1}{\delta  (x)}+\frac{ O(\omega (\delta (x)) )}{\delta (x)} \quad {\rm as}\ x\to x_0.
\end{equation}
\end{lem}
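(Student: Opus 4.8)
The plan is to replace the tubular-coordinate machinery by a direct estimate built on a nearest-boundary-point projection, for which mere $C^1$ regularity suffices. First I would fix $r>0$ small enough that $\overline{B(x_0,2r)}\subset U$ and that $|\nabla G|\ge c_0>0$ on $\overline{\Omega\cap U}\cap\overline{B(x_0,2r)}$ (possible since $x_0\in U\cap\partial\Omega$ forces $\nabla G(x_0)\ne 0$, and $\nabla G$ is continuous on this compact set); I would also shrink $r$ so that a modulus of continuity $\omega$ of $\nabla G$ is valid there. For $x\in\Omega$ with $|x-x_0|<r$ we have $\delta(x)\le|x-x_0|<r$, so the nonempty compact set $\partial\Omega\cap\overline{B(x_0,2r)}$ realizes the distance $\delta(x)$; I pick \emph{any} nearest point $y=y(x)$ in it. Then $|y-x_0|\le|y-x|+|x-x_0|<2r$, so $y\to x_0$ as $x\to x_0$ and $y\in U\cap\partial\Omega$, whence $G(y)=0$ and $\nabla G(y)\ne 0$. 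Because the open ball $B(x,\delta(x))$ is disjoint from $\partial\Omega$, is connected, and contains the point $x\in\Omega$, it is contained in $\Omega$; hence the whole segment $[y,x]$ lies in $\overline{B(x,\delta(x))}\subset\overline{\Omega\cap U}$, where $G$ is $C^1$, and I may integrate $\nabla G$ along it.

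The two geometric facts I would use are: (i) $x-y$ is orthogonal to $T_y\partial\Omega$ — writing $\partial\Omega$ near $y$ as a $C^1$ graph, the function $z\mapsto|x-z|^2$ has an interior local minimum over $\partial\Omega$ at $y$, so its tangential derivative there, namely the tangential part of $2(y-x)$, vanishes; and (ii) $\nabla G(y)$ is also orthogonal to $T_y\partial\Omega$, since $G$ vanishes identically on $U\cap\partial\Omega$. As the normal line to $\partial\Omega$ at $y$ is one-dimensional, this gives $x-y=\delta(x)\,\nu$ and $\nabla G(y)=\mu\,\nu$ for a unit vector $\nu$ and some $\mu$ with $|\mu|=|\nabla G(y)|$. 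The sign of $\mu$ is pinned down by positivity: $(y,x]\subset\Omega$ where $G\ge 0=G(y)$, so the right-hand derivative of $s\mapsto G\big(y+s(x-y)\big)$ at $s=0$, which equals $\nabla G(y)\cdot(x-y)=\mu\,\delta(x)$, is nonnegative, hence $\mu=|\nabla G(y)|>0$.

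Then the computation is short. By the fundamental theorem of calculus along $[y,x]$,
\[
G(x)=G(x)-G(y)=\int_0^1\nabla G\big(y+s(x-y)\big)\cdot(x-y)\ds=|\nabla G(y)|\,\delta(x)+E,
\]
where, since every point $y+s(x-y)$ lies within $s\,\delta(x)\le\delta(x)$ of $y$, the remainder satisfies $|E|\le\int_0^1\omega(s\,\delta(x))\,\delta(x)\ds\le\omega(\delta(x))\,\delta(x)$. Moreover $|x-y|=\delta(x)$ gives $\bigl|\,|\nabla G(x)|-|\nabla G(y)|\,\bigr|\le\omega(\delta(x))$, so $G(x)=|\nabla G(x)|\,\delta(x)+O(\omega(\delta(x))\,\delta(x))$ as $x\to x_0$; in particular $G(x)\ge\delta(x)\big(c_0-\omega(\delta(x))\big)>0$ for $x$ near $x_0$, so the ratios make sense. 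Dividing by $|\nabla G(x)|\,\delta(x)$ and using $|\nabla G(x)|\ge c_0$ to absorb the factor $1/|\nabla G(x)|$ into the $O(\cdot)$, I get $\dfrac{|\nabla G(x)|\,\delta(x)}{G(x)}=1+O(\omega(\delta(x)))$. Since $\omega(\delta(x))\to 0$ this is \eqref{general1}, and multiplying through by $1/\delta(x)$ gives $\left|\dfrac{\nabla G(x)}{G(x)}\right|=\dfrac1{\delta(x)}+\dfrac{O(\omega(\delta(x)))}{\delta(x)}$, which is \eqref{general1,5}.

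The hard part is not the calculus but the geometry: when $\Omega$ is merely $C^1$ the nearest point $y(x)$ can fail to be unique (points near the ridge/cut locus), and one cannot invoke the tubular neighbourhood theorem. The key observation that makes the argument go through — and explains why $\partial\Omega\in C^1$ together with $\nabla G\ne 0$ on the boundary is exactly the right hypothesis — is that uniqueness of $y(x)$ is never used: orthogonality and the fundamental-theorem estimate hold for any choice of nearest point, which is precisely how the tubular neighbourhood theorem is bypassed. The only residual delicacy is purely bookkeeping, namely arranging all the ``near $x_0$'' reductions (the inclusion $\overline{B(x,\delta(x))}\subset\overline{\Omega\cap U}$, the lower bound $|\nabla G|\ge c_0$, and the validity of $\omega$) simultaneously by fixing $r$ once at the outset.
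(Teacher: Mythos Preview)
Your argument is correct and shares the paper's core strategy: choose a nearest boundary point $y=P(x)$ (without claiming uniqueness), use the orthogonality of both $x-y$ and $\nabla G(y)$ to $T_y\partial\Omega$, integrate $\nabla G$ along $[y,x]$, and control errors via the modulus of continuity $\omega$. The execution differs in one efficient respect. The paper centres its Taylor step at the \emph{interior} point $x$, obtaining $G(x)=\nabla G(x)\cdot(x-P(x))+O(\omega\,\delta)$; since $\nabla G(x)$ is not purely normal, this forces a separate Step~4 (an orthonormal tangential/normal decomposition of $\nabla G(x)$ at $P(x)$) to pass from $\nabla G(x)\cdot\nu(P(x))$ to $|\nabla G(x)|$, and a preliminary Step~1 (a local $C^1$ graph representation plus the mean value theorem in the graph direction) to secure the two-sided bound $c\,\delta\le G\le c^{-1}\delta$ needed along the way. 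You instead centre at the \emph{boundary} point $y$, where $\nabla G(y)=|\nabla G(y)|\,\nu$ exactly, so the identity $G(x)=|\nabla G(y)|\,\delta(x)+O(\omega\,\delta)$ comes out directly; the swap $|\nabla G(y)|\to|\nabla G(x)|$ costs only another $O(\omega)$, and the required lower bound $G\ge(c_0-\omega)\,\delta$ drops out as a byproduct. This collapses the paper's Steps~1 and~4 into the main computation and delivers \eqref{general1} and \eqref{general1,5} in one stroke.
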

\begin{proof}  Since $\Omega $ is of class $C^1$, it can be represented locally around $x_0$ as the subgraph of a $C^1$ function. This means that there exists an open neighbourhood $B$ of $x_0$ and an isometry $R$ such that
$R(B)=\Pi_{i=1}^n]a_i, b_i[$ for  $a_i, b_i\in \R$ and
$R(\Omega \cap B)=\{  x\in \Pi_{i=1}^n]a_i, b_i[:   x_n< \varphi (x_1, \dots , x_{n-1}) \} $ where $\varphi $  is   a suitable
$C^1$ function from $\Pi_{i=1}^{n-1}[a_i, b_i]$ to $]a_n, b_n [$.  To shorten our notation, in the sequel we write $\bar x$ for $(x_1, \dots x_{n-1})$.   Moreover, we may assume directly that the isometry $R$ is the identity and that $B\Subset U$.  We now proceed dividing the proof in three steps.

\medskip

{\it Step 1.} We prove that there exists an open neighbourhood $\tilde B\subset B$ of $x_0$ and $c>0$ such that
\begin{equation}\label{general2}
c\delta (x) \le G(x) \le c^{-1}\delta (x)
\end{equation}
for all $x\in  \tilde B \cap\Gw$. Since $\nabla G$ is continuous up to $\partial \Omega$, $G$ vanishes on $\partial \Omega$ and $\nabla G$ does not vanish at any point of $\partial \Omega$, it follows that if $x\in \Omega \cap B$ is sufficiently close to $\partial \Omega$,
then $\frac{\partial G (x)}{\partial x_n}\ne 0$, hence there exists $c_1 >0 $ such that
\begin{equation}\label{general3}
c_1 \le
\left|   \frac{\partial G (x)}{\partial x_n} \right|
 \le c_1^{-1}
\end{equation}
for all $x\in \tilde B\cap \Gw$,  where $\tilde B$ is an open neighbourhood of $x_0$ with $\tilde B\subset B$.
Now, by the Lagrange's mean value theorem, we have
$G(\bar x,x_n)= \frac{\partial G (\bar x, \xi_x )}{\partial x_n} (x_n-\varphi (\bar x))$
where $\xi_x \in ]x_n,\varphi (\bar x)[$, hence
\begin{equation}\label{general4}
c_1(\varphi (\bar x) -x_n) \le G(x) \le c^{-1}_1( \varphi (\bar x)-x_n)
\end{equation}
for all $x\in \tilde B\cap\Gw$. By standard arguments and by possibly shrinking $\tilde B$, we have that there exists $c_2>0$ such that
 \begin{equation}\label{general5}
\delta (x)\le  \varphi (\bar x) -x_n\le c_2  \delta (x)
 \end{equation}
for all $x\in \tilde B\cap\Gw$, which combined with (\ref{general4}) yields (\ref{general2}).

\medskip

{\it Step 2.} Let $\omega $ be a modulus of continuity of $\nabla G$ on $\overline {\Omega \cap  B}$ as in the statement.  For every  $x\in \Omega$ we denote by $P(x)$ a point in  $\partial \Omega$ of minimal distance of $x$ from $\partial \Omega$,  which means that  $\delta (x)=|x-P(x)|$.  We prove that
\begin{equation}\label{general6}
G(x)=\nabla G(x)\cdot (x-P(x))+O(\omega (\delta (x)) )\delta (x)    \quad  {\rm as}\ x\to x_0.
\end{equation}
By the  Lagrange's mean value theorem applied to the function $$t\mapsto G(P(x)+t(x-P(x))),\quad  \mbox{where } t\in [0,1],$$  and $x$ is fixed in $\Omega\cap B$, we obtain
\begin{multline}\label{general7}
\lefteqn{G(x)=G(P(x))+\nabla G(P(x)+\eta_x(x-P(x)))\cdot (x-P(x))}\\
 =\nabla G(x)\cdot (x-P(x))+ \left(  \nabla G( P(x)+\eta_x(x-P(x)) ) -\nabla G(x) \right)\cdot (x-P(x)),
\end{multline}
for some $\eta _x\in ]0,1[$.   Then we have
\begin{multline}\label{general8} \lefteqn{
\left|
\left(  \nabla G( P(x)+\eta_x(x-P(x)) ) -\nabla G(x) \right)\cdot (x-P(x))
\right|  }    \\
 \le  \omega (  |  (\eta_x -1)(x-P(x)) |)|x-P(x) |\le \omega (\delta (x))\delta (x),
\end{multline}
 for all $x\in \Omega \cap B$. By  combining (\ref{general7}) and
 (\ref{general8}) we obtain (\ref{general6}).

\medskip

 {\it Step 3.} We note that
 \begin{equation}\label{general9}
 \lim_{x\to x_0}P(x)=x_0\ \ {\rm and}\ \ \frac{x-P(x)}{|x-P(x)|}= \nu (P(x)),
 \end{equation}
 where $\nu (P(x))$ is the unit inner normal to $\partial \Omega $ at the point $P(x)$. By (\ref{general6}) and the second equality in  (\ref{general9}) we have
 \begin{eqnarray}\label{general9,5}
 \frac{\nabla G(x)}{G(x)}\cdot \nu (P(x))= \frac{1}{\delta (x)}    +\frac{O(\omega (\delta (x)))  }{G(x)}\,.
 \end{eqnarray}
Consequently, by (\ref{general2}) and using the fact that $\omega (\delta (x))\to 0$ as $x\to x_0$, we deduce that
 \begin{equation}\label{general10}
 \lim_{x\to x_0}  \frac{\nabla G(x)}{G(x)}\cdot \nu (P(x)) \delta (x) =1.
 \end{equation}
Thus, by (\ref{general10})
 \begin{equation}
 \lim_{x\to x_0}\frac{|\nabla G(x)|}{G(x)}\delta (x)=\lim_{x\to x_0}\frac{|\nabla G(x)|  \nabla G(x)\cdot \nu (P(x)) }{ G(x)\nabla G(x)\cdot \nu (P(x))}  \delta (x)=\frac{ |\nabla G(x_0)| }{  \nabla G(x_0)\cdot \nu (x_0) }=1,
 \end{equation}
 where in the last equality we have used the fact that $\nabla G(x_0)=\nabla G(x_0) \cdot \nu (x_0)\nu (x_0)$ and $\nabla G(x_0) \cdot \nu (x_0)>0$ since $\nu $ points inwards. This completes the proof of (\ref{general1}).

\medskip

{\it Step 4.} For $x\in U\cap\Gw$ we consider an orthonormal  basis $$\{V_1(P(x)), \dots , V_{n-1}(P(x))\}$$ of the tangent hyperplane to $\partial \Omega$  at the point $P(x)$. Since $G$ vanishes on $U\cap \partial\Omega $ we have $\nabla G(P(x)) \cdot V_i(P(x))=0 $ for all $i=1,\dots , n-1$ hence
\begin{multline}\lefteqn{
 \nabla G(x) = \sum_{i=1}^{n-1}\nabla G(x) \cdot V_i(P(x))V_i(P(x))+ \nabla G(x)\cdot \nu (P(x)) \nu (P(x)) } \\
 = \sum_{i=1}^{n-1}(\nabla G(x)- \nabla G(P(x)))\cdot V_i(P(x))V_i(P(x))+ \nabla G(x)\cdot \nu (P(x)) \nu (P(x))\\
 = O( \omega (\delta (x)))+  \nabla G(x)\cdot \nu (P(x)) \nu (P(x)),
\end{multline}
which combined with (\ref{general2}) and (\ref{general9,5}) yields (\ref{general1,5}).  \end{proof}

We also need the following lemma which represents a special case of a general statement proved in \cite[Lemma~2.10]{depi}. Formula (\ref{plapsum1}) has to be understood in the distributional sense.

\begin{lem}\label{plapsum}  Let  $U$ be an open set in ${\mathbb{R}}^n$, and let $G$ be  a positive function of class $C^{1}(U)$. Then for all $\alpha , \beta >0$ we have
\begin{eqnarray}\label{plapsum1}\lefteqn{
\Delta_p (G^{\alpha}\pm G^{\beta})=\left|\alpha G^{\alpha -1}\pm \beta G^{\beta -1} \right|^{p-2}\biggl[\left(\alpha G^{\alpha -1}\pm \beta G^{\beta -1} \right) \Delta_pG \biggr.} \nonumber \\
& &\qquad\qquad\quad + (p-1)|\nabla G |^{p}\left[ (\alpha^2-\alpha)G^{\alpha -2  } \pm (\beta^2-\beta )G^{\beta -2  }\right] \biggr] .
\end{eqnarray}
\end{lem}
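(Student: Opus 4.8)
The plan is to compute $\Delta_p(G^\alpha \pm G^\beta)$ directly from the definition $\Delta_p v = \Div(|\nabla v|^{p-2}\nabla v)$ by treating $G^\alpha \pm G^\beta$ as a $C^1$-function of $G$ and applying the chain rule, keeping in mind that the final identity is only claimed in the distributional sense (so that $\Delta_p G$, which need not be a function a priori, is handled as a distribution). Write $w = \phi(G)$ with $\phi(t) = t^\alpha \pm t^\beta$, so that $\phi'(t) = \alpha t^{\alpha-1} \pm \beta t^{\beta-1}$ and $\phi''(t) = (\alpha^2-\alpha)t^{\alpha-2} \pm (\beta^2-\beta)t^{\beta-2}$. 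Then $\nabla w = \phi'(G)\nabla G$, hence $|\nabla w|^{p-2}\nabla w = |\phi'(G)|^{p-2}\phi'(G)\,|\nabla G|^{p-2}\nabla G$, and taking the divergence gives, formally,
\begin{equation*}
\Delta_p w = |\phi'(G)|^{p-2}\phi'(G)\,\Delta_p G + \nabla\!\left(|\phi'(G)|^{p-2}\phi'(G)\right)\cdot |\nabla G|^{p-2}\nabla G.
\end{equation*}
The second term equals $(p-1)|\phi'(G)|^{p-2}\phi''(G)\,|\nabla G|^{p-2}|\nabla G|^2 = (p-1)|\phi'(G)|^{p-2}\phi''(G)\,|\nabla G|^p$, using $\frac{d}{dt}\bigl(|s|^{p-2}s\bigr) = (p-1)|s|^{p-2}$. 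Substituting $\phi'$ and $\phi''$ yields exactly $(\ref{plapsum1})$.

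The step requiring care — and the main obstacle — is making the above computation rigorous in the distributional sense, since $G$ is only assumed $C^1(U)$, so $\phi'(G)$ and $|\nabla G|^{p-2}\nabla G$ are merely continuous and one cannot differentiate them classically. The clean way is to invoke \cite[Lemma~2.10]{depi}, of which this lemma is explicitly a special case: that reference establishes a general "superposition" formula for $\Delta_p(\Phi\circ G)$ for suitable $\Phi$ and $p$-admissible $G$, with the identity interpreted distributionally (against test functions $\varphi \in C_c^\infty(U)$), precisely so that $\phi'(G)\Delta_p G$ makes sense as the distribution $\varphi \mapsto -\int_U |\nabla G|^{p-2}\nabla G\cdot\nabla\bigl(|\phi'(G)|^{p-2}\phi'(G)\varphi\bigr)\dx$. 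Thus the proof reduces to checking that $\phi(t)=t^\alpha\pm t^\beta$ satisfies the hypotheses on $\Phi$ in \cite[Lemma~2.10]{depi} on the range of $G$ (positivity of $G$ ensures we stay away from $t=0$ where $\phi$ could be singular, and $\phi\in C^2(0,\infty)$), and then specializing the general formula there. Since $G$ is positive and continuous, on any relatively compact subset of $U$ it is bounded between two positive constants, so all the quantities $G^{\alpha-1}, G^{\alpha-2}$, etc., are bounded and continuous there, which is what is needed for the distributional pairing to be well-defined.

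Concretely, I would structure the write-up as: (i) set $\phi(t) = t^\alpha \pm t^\beta$ and record $\phi', \phi''$; (ii) note $G>0$ and $G\in C^1(U)$ so that $\phi\circ G$ and all derived expressions are well-defined and locally bounded; (iii) apply \cite[Lemma~2.10]{depi} to $\phi\circ G$ to get the general superposition identity $\Delta_p(\phi\circ G) = |\phi'(G)|^{p-2}\phi'(G)\Delta_p G + (p-1)|\phi'(G)|^{p-2}\phi''(G)|\nabla G|^p$ in the distributional sense; (iv) substitute the explicit $\phi', \phi''$ and collect terms to obtain $(\ref{plapsum1})$. I do not expect any genuinely new difficulty beyond correctly citing and specializing the already-available result; the only thing to be attentive to is consistency of the distributional interpretation, which is why the statement flags that $(\ref{plapsum1})$ holds distributionally.
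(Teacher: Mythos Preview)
Your proposal is correct and matches the paper's approach exactly: the paper does not give an independent proof but simply states that the lemma is a special case of \cite[Lemma~2.10]{depi}, with the formula understood distributionally. Your write-up is in fact more detailed than what the paper provides, since you spell out the formal chain-rule computation and the verification that $\phi(t)=t^\alpha\pm t^\beta$ meets the hypotheses of the cited lemma.
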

We are now ready to prove the following theorem which guarantees the existence of the above mentioned subsolutions and supersolutions
in a neighbourhood of a compact boundary.
\begin{lem}\label{subsol}
Let $\Omega $ be a domain in ${\mathbb{R}}^n$ with nonempty compact boundary  of class $C^{1}$ and
$U$ be an open neighbourhood of $\partial \Omega$. Let $\gamma \in ]0,1]$ and   $G\in C^{1,\gamma }(\overline {\Omega \cap U})$ be a positive function such that $\Delta_pG=0$ in $\Omega \cap U$ and $G(x)=0$ for all $x\in \partial \Omega$.  Let $\alpha , \beta \in (0,1)$ be such that $(p-1)/p\le \alpha <\beta  <\alpha  +\gamma  $.
  Then there exists an open neighbourhood ${\mathcal{U}}$ of $\partial \Omega$, ${\mathcal{U}}\subset U$, such that
 the functions $G^{\alpha}+G^{\beta}$  and $G^{\alpha}-G^{\beta}$   are a subsolution and a supersolution, respectively, for the equation $-\Delta_pv=\lambda_{\alpha }{\mathcal{I}}_pv /\delta^p$ in $\Omega \cap {\mathcal{U}}$, where $\lambda_{\alpha }=(p-1)\alpha ^{p-1}(1-\alpha)$. Moreover, ${\mathcal{U}}$ can be chosen to be independent  of  small perturbations of $\alpha$ and $\beta$.
\end{lem}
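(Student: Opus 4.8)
The plan is to use Lemma~\ref{plapsum} to compute $\Delta_p(G^\alpha\pm G^\beta)$ explicitly near $\partial\Omega$, then divide by $\mathcal{I}_p(G^\alpha\pm G^\beta)$ and compare the resulting coefficient with $\lambda_\alpha/\delta^p$, using the asymptotics of $|\nabla G|/G$ from Lemma~\ref{general}. Since $\partial\Omega$ is compact and $G\in C^{1,\gamma}$ up to the boundary with $\nabla G\ne 0$ there (so the hypotheses of Lemma~\ref{general} hold at every boundary point, with $\omega(t)=Ct^\gamma$ a common modulus of continuity by compactness), it suffices to work in a neighbourhood of a single point $x_0\in\partial\Omega$ and then take a finite subcover; the set $\mathcal U$ will be the union of the resulting neighbourhoods, and its stability under small perturbations of $\alpha,\beta$ will be transparent from the construction because the estimates depend continuously on these parameters and the strict inequalities are open conditions.

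The computation itself: by Lemma~\ref{plapsum} with $\Delta_p G=0$ in $\Omega\cap U$,
\begin{equation*}
\Delta_p(G^\alpha\pm G^\beta)=(p-1)|\nabla G|^p\left|\alpha G^{\alpha-1}\pm\beta G^{\beta-1}\right|^{p-2}\left[(\alpha^2-\alpha)G^{\alpha-2}\pm(\beta^2-\beta)G^{\beta-2}\right].
\end{equation*}
Writing $W:=|\nabla G/G|^p$ and factoring out $G^{\alpha-1}$ from the bracket $|\alpha G^{\alpha-1}\pm\beta G^{\beta-1}|^{p-2}$ and $G^{\alpha-2}$ from the last bracket, one finds after dividing by $\mathcal{I}_p(G^\alpha\pm G^\beta)=(G^\alpha\pm G^\beta)^{p-1}$ that
\begin{equation*}
\frac{-\Delta_p(G^\alpha\pm G^\beta)}{\mathcal{I}_p(G^\alpha\pm G^\beta)}=(p-1)\,W\,\frac{\left|\alpha\pm\beta G^{\beta-\alpha}\right|^{p-2}\bigl[(\alpha-\alpha^2)\mp(\beta^2-\beta)G^{\beta-\alpha}\bigr]}{\left(1\pm G^{\beta-\alpha}\right)^{p-1}}.
\end{equation*}
As $x\to\partial\Omega$ we have $G\to0$, hence $G^{\beta-\alpha}\to0$ (since $\beta>\alpha$), so the fraction on the right tends to $\alpha^{p-2}(\alpha-\alpha^2)=\alpha^{p-1}(1-\alpha)$, i.e. the whole expression is asymptotically $\lambda_\alpha W$. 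The key point is to control the sign of the first-order correction: expanding in powers of $G^{\beta-\alpha}$, the coefficient of $G^{\beta-\alpha}$ in the numerator-minus-$\alpha^{p-1}(1-\alpha)$-times-denominator is, for the $+$ sign, negative, and for the $-$ sign, positive — this is exactly the classical Agmon-trick sign bookkeeping and follows from a direct (but routine) computation showing the correction has size $-\lambda_\alpha\,O(G^{\beta-\alpha})$ for $G^\alpha+G^\beta$ and $+\lambda_\alpha\,O(G^{\beta-\alpha})$ for $G^\alpha-G^\beta$, with an implied constant uniform over small $\alpha,\beta$. Therefore near $\partial\Omega$,
\begin{equation*}
\frac{-\Delta_p(G^\alpha+G^\beta)}{\mathcal{I}_p(G^\alpha+G^\beta)}\le \lambda_\alpha W(1+o(1))-c\,\lambda_\alpha G^{\beta-\alpha},\qquad
\frac{-\Delta_p(G^\alpha-G^\beta)}{\mathcal{I}_p(G^\alpha-G^\beta)}\ge \lambda_\alpha W(1+o(1))+c\,\lambda_\alpha G^{\beta-\alpha},
\end{equation*}
for some $c>0$ (here one also uses that $1-G^{\beta-\alpha}>0$ near the boundary so $G^\alpha-G^\beta>0$ there).

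It remains to replace $W$ by $\delta^{-p}$. By Lemma~\ref{general}, $W(x)=\delta(x)^{-p}\bigl(1+O(\omega(\delta(x)))\bigr)^p=\delta(x)^{-p}\bigl(1+O(\delta(x)^\gamma)\bigr)$ as $x\to x_0$, with $\omega(t)=Ct^\gamma$. Hence $\lambda_\alpha W=\lambda_\alpha\delta^{-p}+\lambda_\alpha\delta^{-p}O(\delta^\gamma)$. The decisive inequality is $\beta-\alpha<\gamma$: since $c\,\delta\le G\le c^{-1}\delta$ near the boundary by \eqref{general2} (proved inside Lemma~\ref{general}), the good term satisfies $c\,\lambda_\alpha G^{\beta-\alpha}\ge c'\,\lambda_\alpha\delta^{\beta-\alpha}$, and we need this to dominate the error $\lambda_\alpha\delta^{-p}O(\delta^\gamma)\cdot\delta^p$-type contribution — more precisely, after multiplying through by $\delta^p$, the error from $W$ is $O(\delta^\gamma)$ while the good term is $\gtrsim \delta^{\beta-\alpha}$; since $\beta-\alpha<\gamma$, for $\delta$ small the good term wins. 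Thus, shrinking the neighbourhood of $x_0$ so that all the $o(1)$ and $O(\cdot)$ terms are absorbed, we get $-\Delta_p(G^\alpha+G^\beta)\le\lambda_\alpha\delta^{-p}\mathcal{I}_p(G^\alpha+G^\beta)$ and $-\Delta_p(G^\alpha-G^\beta)\ge\lambda_\alpha\delta^{-p}\mathcal{I}_p(G^\alpha-G^\beta)$ there, in the distributional sense — note $G^\alpha\pm G^\beta\in C^1$ and the identity \eqref{plapsum1} of Lemma~\ref{plapsum} is distributional, so the subsolution/supersolution inequalities hold against nonnegative test functions. Covering $\partial\Omega$ by finitely many such neighbourhoods and intersecting with $U$ produces $\mathcal U$; the uniformity of all constants over small perturbations of $\alpha,\beta$ (using $(p-1)/p\le\alpha$, so $\lambda_\alpha$ stays bounded away from degeneracies, and $\beta-\alpha$ stays strictly below $\gamma$) shows $\mathcal U$ can be taken independent of such perturbations.

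The main obstacle is the bookkeeping in the second paragraph: one must verify the \emph{sign} of the first-order correction in $G^{\beta-\alpha}$ is favourable for both choices simultaneously (negative for the subsolution, positive for the supersolution), and that its magnitude genuinely dominates the $W$-versus-$\delta^{-p}$ discrepancy, which is where the hypothesis $\beta<\alpha+\gamma$ enters decisively. Everything else — the finite cover, the distributional interpretation, the perturbation-independence — is routine once Lemmas~\ref{general} and \ref{plapsum} are in hand.
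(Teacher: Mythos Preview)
Your proposal is correct and follows essentially the same route as the paper: compute $-\Delta_p(G^\alpha\pm G^\beta)$ via Lemma~\ref{plapsum}, expand to first order in $G^{\beta-\alpha}$, and use Lemma~\ref{general} to trade $W$ for $\delta^{-p}$ with an $O(\delta^\gamma)$ error absorbed by the hypothesis $\beta-\alpha<\gamma$. The one place you are hand-wavy is the ``routine'' sign check of the first-order coefficient: the paper carries this out explicitly, showing that $A:=(p-2)\lambda_\alpha\beta/\alpha+\lambda_\beta\alpha^{p-2}/\beta^{p-2}<(p-1)\lambda_\alpha$, and it is \emph{this} inequality (not any ``degeneracy'' of $\lambda_\alpha$) that genuinely consumes the hypothesis $(p-1)/p\le\alpha<\beta$, so you should state and verify it rather than defer it.
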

\begin{proof} First we consider the case of the subsolution. By Lemma~\ref{plapsum}
 and   (\ref{general1,5}), it follows that
\begin{multline}\label{subsol1}
-\Delta_p (G^{\alpha}+G^{\beta})=\left(\alpha G^{\alpha-1}+\beta G^{\beta -1} \right)^{p-2}\left(\frac{\lambda_{\alpha}}{\alpha^{p-2}}G^{\alpha -2}+\frac{\lambda_{\beta}}{\beta^{p-2}}G^{\beta -2}  \right)|\nabla G|^p \\[2mm]
=G^{\alpha (p-1) }\left(\alpha +\beta G^{\beta -\alpha}\right)^{p-2}\left(\frac{\lambda_{\alpha}}{\alpha^{p-2}}+\frac{\lambda_{\beta}}{\beta^{p-2}}  G^{\beta-\alpha }\right)\left|\frac{\nabla G}{G}\right|^p \\[2mm]
\le G^{\alpha (p-1)}(\alpha +\beta G^{\beta -\alpha})^{p-2}\left(\frac{\lambda_{\alpha}}{\alpha^{p-2}}+ \frac{\lambda_{\beta}}{\beta^{p-2}} G^{\beta-\alpha }\right)\left(\frac{1}{\delta ^p}+O(\delta ^{\gamma -p})\right).
\end{multline}

By (\ref{subsol1}), in order to guarantee that $G^{\alpha}+G^{\beta}$ is a subsolution as required in the statement,  it suffices to  impose the condition
\begin{equation*}
G^{\alpha (p-1) }\!\left(\alpha +\beta G^{\beta -\alpha}\right)^{p-2}\!\!\left(\frac{\lambda_{\alpha}}{\alpha^{p-2}} +\frac{\lambda_{\beta}}{\beta^{p-2}}G^{\beta-\alpha }\right)\!\!\left(\frac{1}{\delta ^p}+O(\delta ^{\gamma -p})\right)\!\!\leq\! \frac{\lambda_{\alpha}}{\delta^p} (G^{\alpha}+G^{\beta })^{p-1}
\end{equation*}
which can written in the form
\begin{equation}\label{subsol2}
\left(\alpha +\beta G^{\beta -\alpha}\right)^{p-2}\left(\frac{\lambda_{\alpha}}{\alpha^{p-2}}+\frac{\lambda_{\beta}}{\beta^{p-2}}G^{\beta-\alpha }\right)
\left(1+O(\delta ^{\gamma })\right)\leq \lambda_{\alpha} (1+G^{\beta -\alpha })^{p-1}.
\end{equation}
Since $G^{\beta -\alpha}=0$ on $\partial \Omega$, by expanding both sides of (\ref{subsol2}) in $G^{\beta -\alpha}$ up to the first order,
inequality (\ref{subsol2}) can also be written in the form
\begin{equation}\label{subsol2bis}
(\lambda_{\alpha} +AG^{\beta -\alpha} +o(G^{\beta -\alpha }))\left(1+O(\delta ^{\gamma })\right)\le \lambda_{\alpha}(1 +(p-1)G^{\beta -\alpha }+o(G^{\beta -\alpha})),
\end{equation}
where
$$A:=(p-2)\lambda_{\alpha}\beta/\alpha+\lambda_{\beta}\alpha^{p-2}/\beta^{p-2}.$$
Note that since $G(x)$ is asymptotic to $\delta (x)$ as $x\to \partial \Omega $ and $\beta -\alpha < \gamma $, we have that  $\delta (x)^{\gamma }  /G(x)^{\beta -\alpha }\to 0$ as $x\to \partial \Omega $. Moreover, by a direct   computation and by using  condition $(p-1)/p \le \alpha <\beta$, one can easily verify that $A<(p-1)\lambda_{\alpha}$. Thus, passing to the limit as $x\to \partial \Omega$ in both sides of (\ref{subsol2bis}), one can see  that condition (\ref{subsol2bis})  is satisfied in $\Omega \cap {\mathcal{U}}$, where ${\mathcal{U}}$ is a suitable neighbourhood of $\partial\Omega$ which can be chosen to be independent of $\alpha $ and  $\beta$, if $\alpha $ and $\beta$ are as in the statement and belong to small neighbourhoods of two fixed parameters
$\alpha_0$, $\beta_0$ satisfying the conditions $(p-1)/p\le \alpha_0< \beta_0$.

\medskip

We now consider the case of the supersolution. Proceeding as above, we see that in order to guarantee that $G^{\alpha}-G^{\beta}$ is a positive supersolution as required in the statement, we clearly may first take a small neighbourhood  ${\mathcal{U}}_1$ of $\partial \Gw$ such that $G^{\alpha}-G^{\beta}$ is positive in $\Omega \cap {\mathcal{U}}_1$.
  So,  it suffices to impose the condition
\begin{equation*}
G^{\alpha (p-1) }\!\!\left|\alpha -\beta G^{\beta -\alpha}\right|^{p-2}\!\!\left(\frac{\lambda_{\alpha}}{\alpha^{p-2}} -\frac{\lambda_{\beta}}{\beta^{p-2}}G^{\beta-\alpha }\right)\!\!\left(\frac{1}{\delta ^p}-O(\delta ^{\gamma -p})  \right)\geq \frac{\lambda_{\alpha}}{\delta^p} (G^{\alpha}-G^{\beta })^{p-1}
\end{equation*}
in $\Omega \cap {\mathcal{U}_2}$, where ${\mathcal{U}_2}$ is a smaller neighbourhood of $\partial \Gw$. The latter inequality can be written in the form
\begin{equation}
(\lambda_{\alpha} -AG^{\beta -\alpha} +o(G^{\alpha -\beta }))\left(1-O(\delta ^{\gamma })\right)\geq \lambda_{\alpha}(1 -(p-1)G^{\beta -\alpha }+o(G^{\alpha -\beta})),
\end{equation}
where $A$ is the same constant defined above. Again, since $A <(p-1) \lambda_{\alpha}$ we easily deduce  as in the case of the subsolution the desired assertion. \end{proof}

\medskip

We now construct  sub- and super-solutions near $\infty$ for the operator $$-\Delta_p-\lambda_{\alpha } \left| \frac{p-n}{p-1} \right|^p  \frac{{\mathcal{I}}_p}{\delta^p}$$ on an unbounded domain $\Gw$ with compact boundary. Recall that if $p=n$, then for such a domain $H_p(\Gw)=0$.  So, for our purpose, we need to consider only the case where $p\neq n$.
\begin{lem}\label{subsolinf} Let $\Omega $ be an unbounded domain in ${\mathbb{R}}^n$ with nonempty compact boundary. Let $G$ be the function defined in ${\mathbb{R}}^n\setminus \{0\}$ by $G(x):=|x|^{\frac{p-n}{p-1}}$ for all $x\in {\mathbb{R}}^n\setminus \{0\}$.
Then the following statements hold:
\begin{itemize}
\item[(i)] If $p<n$ and  $\alpha , \beta \in (0,1)$ are such that
$$\frac{p-1}{p}\le \alpha <\beta  <\alpha  +\frac{p-1}{n-p}  \,,$$
then  there exists  $M>0$ such that
 the functions $G^{\alpha}+G^{\beta}$, $G^{\alpha}-G^{\beta}$ are a subsolution and a  supersolution, respectively, for the equation $-\Delta_pv=\lambda_{\alpha } \left| \frac{p-n}{p-1} \right|^p  {\mathcal{I}}_pv /\delta^p$ on $\{x\in {\mathbb{R}}^n :\ |x|>M \}$.
 \item[(ii)]  If $p>n$ and  $\alpha , \beta \in (0,1)$ are such that $  \beta <\alpha  \le (p-1)/p$,
  then  there exists  $M>0$ such that
 the functions $G^{\alpha}+G^{\beta}$, $G^{\alpha}-G^{\beta}$ are a subsolution and a supersolution respectively, for the equation $-\Delta_pv=\lambda_{\alpha }  \left| \frac{p-n}{p-1} \right|^p {\mathcal{I}}_pv /\delta^p$ on $\{x\in {\mathbb{R}}^n :\ |x|>M \}$.
 \end{itemize}
\end{lem}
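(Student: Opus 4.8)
The plan is to run the argument of Lemma~\ref{subsol} essentially verbatim, with two modifications: the input of Lemma~\ref{general} (which near $\partial\Omega$ replaces the tubular neighbourhood theorem) is here furnished by an elementary explicit computation, since $G$ is given in closed form, and the algebraic inequality $A<(p-1)\lambda_\alpha$ must be re-derived in the range $\beta<\alpha$ of case~(ii), which is not covered by Lemma~\ref{subsol}. First I would collect the facts about $G(x)=|x|^{(p-n)/(p-1)}$: up to a multiplicative constant it is the fundamental solution of $-\Delta_p$, so $\Delta_pG=0$ on $\mathbb{R}^n\setminus\{0\}$, whence the $\Delta_pG$ term in Lemma~\ref{plapsum} vanishes on $\{|x|>M\}$ (in fact, by Lemma~\ref{devpinlem}, $G^\alpha$ already solves $(-\Delta_p-\lambda_\alpha W\mathcal{I}_p)G^\alpha=0$ exactly, with $W=|\nabla G/G|^p=c^p|x|^{-p}$ and $c:=|(p-n)/(p-1)|$). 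Since $\partial\Omega$ is compact, say $\partial\Omega\subset\overline{B_\rho(0)}$, one has $|x|-\rho\le\delta(x)\le|x|+\rho$, so $|x|$ and $\delta(x)$ are comparable near infinity and
\[
\left|\frac{\nabla G(x)}{G(x)}\right|^p=\frac{c^p}{\delta(x)^p}\bigl(1+O(\delta(x)^{-1})\bigr)\qquad\text{as }|x|\to\infty,
\]
which is the explicit analogue of the asymptotics used in (\ref{subsol1}), with the H\"older error there replaced by the relative error $O(\delta^{-1})$ and the factor $1$ by $c^p$. Finally $G^{\beta-\alpha}\to0$ at infinity in both cases — in (i) because $p<n$ forces $G\to0$ while $\beta>\alpha$, in (ii) because $p>n$ forces $G\to\infty$ while $\beta<\alpha$ — so $G^\alpha-G^\beta>0$ near infinity and all the factors appearing below are positive there.

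Substituting these facts into Lemma~\ref{plapsum} and using $(p-1)\alpha(1-\alpha)=\lambda_\alpha/\alpha^{p-2}$ (and likewise for $\beta$), I obtain near infinity
\[
-\Delta_p(G^\alpha\pm G^\beta)=G^{\alpha(p-1)}\bigl(\alpha\pm\beta G^{\beta-\alpha}\bigr)^{p-2}\Bigl(\tfrac{\lambda_\alpha}{\alpha^{p-2}}\pm\tfrac{\lambda_\beta}{\beta^{p-2}}G^{\beta-\alpha}\Bigr)\Bigl(\tfrac{c^p}{\delta^p}+O(\delta^{-p-1})\Bigr),
\]
which is exactly (\ref{subsol1}) with the constant $c^p$ inserted. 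As in (\ref{subsol2})--(\ref{subsol2bis}), I then divide the desired subsolution (resp.\ supersolution) inequality for $-\Delta_pv=\lambda_\alpha c^p\,\mathcal{I}_pv/\delta^p$ by $c^p\delta^{-p}G^{\alpha(p-1)}$ and expand both sides to first order in $G^{\beta-\alpha}$, reducing the claim to
\[
\bigl(A-(p-1)\lambda_\alpha\bigr)G^{\beta-\alpha}+o(G^{\beta-\alpha})\le0\quad(\text{resp. }\ge0),\qquad A:=(p-2)\lambda_\alpha\tfrac\beta\alpha+\lambda_\beta\bigl(\tfrac\alpha\beta\bigr)^{p-2},
\]
valid once the error $O(\delta^{-1})$ is negligible compared with $G^{\beta-\alpha}$. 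For the latter I write $G^{\beta-\alpha}=|x|^{-\mu}$: in (i) $\mu=(\beta-\alpha)(n-p)/(p-1)$, and $\mu<1$ is precisely the hypothesis $\beta<\alpha+(p-1)/(n-p)$; in (ii) $\mu=(\alpha-\beta)(p-n)/(p-1)<\alpha(p-n)/(p-1)\le(p-n)/p<1$ because $\alpha\le(p-1)/p$ and $p-n<p$. Hence $O(\delta^{-1})=O(|x|^{-1})=o(|x|^{-\mu})=o(G^{\beta-\alpha})$ in both cases.

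It remains to check the sign of $A-(p-1)\lambda_\alpha$, which follows from the identity (obtained by factoring the quadratic in $\beta$, whose roots are $\alpha$ and $-(p-1)\alpha$)
\[
(p-1)\lambda_\alpha-A=(p-1)\alpha^{p-2}(\alpha-\beta)\bigl((p-1)(1-\alpha)-\beta\bigr).
\]
In case~(i), $\alpha<\beta$ and $(p-1)(1-\alpha)\le(p-1)/p\le\alpha<\beta$, so both factors are negative; in case~(ii), $\beta<\alpha$ and $\beta<\alpha\le(p-1)/p\le(p-1)(1-\alpha)$, so both factors are positive. Either way $A<(p-1)\lambda_\alpha$, the displayed inequalities above hold for $|x|$ large, and choosing $M$ accordingly finishes the proof. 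The only points requiring care — and the only places the hypotheses on $\alpha,\beta$ enter — are (a) ensuring the mismatch between $\delta$ and $|x|$ is of strictly lower order than the perturbation $G^{\beta-\alpha}$, which is what forces the upper bound on $\beta-\alpha$ in case~(i), and (b) re-deriving $A<(p-1)\lambda_\alpha$ in the new regime $\beta<\alpha$ of case~(ii); I expect (b) to be the only genuinely new computation relative to Lemma~\ref{subsol}, and it is elementary.
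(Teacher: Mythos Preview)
Your proof is correct and follows essentially the same route as the paper's: apply Lemma~\ref{plapsum} with $\Delta_pG=0$, compute $|\nabla G/G|^p=c^p|x|^{-p}$, replace $|x|^{-p}$ by $\delta^{-p}(1+O(\delta^{-1}))$ using compactness of $\partial\Omega$, expand to first order in $G^{\beta-\alpha}$, and conclude from $A<(p-1)\lambda_\alpha$ once the error $O(\delta^{-1})$ is shown to be $o(G^{\beta-\alpha})$. Your explicit factorization $(p-1)\lambda_\alpha-A=(p-1)\alpha^{p-2}(\alpha-\beta)\bigl((p-1)(1-\alpha)-\beta\bigr)$ is a welcome addition, since the paper merely asserts that the inequality ``can be easily seen'' in the regime $\beta<\alpha$; one small slip: the parenthetical claim that the roots of the quadratic in $\beta$ are $\alpha$ and $-(p-1)\alpha$ is incorrect --- they are $\alpha$ and $(p-1)(1-\alpha)$ --- but the displayed identity itself is right, as one checks by expanding.
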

\begin{proof} By Lemma~\ref{plapsum} it follows that
\begin{multline*}
-\Delta_p (G^{\alpha}\pm G^{\beta}) =\left|\alpha G^{\alpha-1}\pm \beta G^{\beta -1} \right|^{p-2}\left(\frac{\lambda_{\alpha}}{\alpha^{p-2}}G^{\alpha -2}\pm \frac{\lambda_{\beta}}{\beta^{p-2}}G^{\beta -2}  \right)|\nabla G|^p \\
=G^{\alpha (p-1) }\left|\alpha \pm \beta G^{\beta -\alpha}\right|^{p-2}\left(\frac{\lambda_{\alpha}}{\alpha^{p-2}}\pm \frac{\lambda_{\beta}}{\beta^{p-2}}  G^{\beta-\alpha }\right)\left|\frac{\nabla G}{G}\right|^p \\
=G^{\alpha (p-1) }\left|\alpha \pm \beta G^{\beta -\alpha}\right|^{p-2}\left(\frac{\lambda_{\alpha}}{\alpha^{p-2}}\pm \frac{\lambda_{\beta}}{\beta^{p-2}}  G^{\beta-\alpha }\right)
\left|   \frac{p-n}{p-1}  \right|^p\frac{1}{|x|^p}\,.
\end{multline*}
Since $\partial \Omega$ is compact, it follows that
$|\delta^{-p}-|x|^{-p} |\le O(\delta^{-1})\delta^{-p} $ as $|x|\to \infty$.  Thus, in order to verify that $G^{\alpha}+G^{\beta}$ is a subsolution as required in the statement, it suffices to impose the condition
\begin{equation}\label{subsol1inf}
(\alpha + \beta G^{\beta -\alpha})^{p-2}\left(\frac{\lambda_{\alpha}}{\alpha^{p-2}}+ \frac{\lambda_{\beta}}{\beta^{p-2}}  G^{\beta-\alpha }\right)
\left(  1+O(\delta^{-1})\right)
 \le \lambda_{\alpha} (1+G^{\beta -\alpha })^{p-1}.
\end{equation}
Similarly, in order to guarantee that $G^{\alpha}-G^{\beta}$ is a supersolution as required in the statement, it suffices to impose the condition
\begin{equation}\label{subsol1infbis}
\left|\alpha - \beta G^{\beta -\alpha}\right|^{p-2}\!\!\left(\frac{\lambda_{\alpha}}{\alpha^{p-2}}- \frac{\lambda_{\beta}}{\beta^{p-2}}  G^{\beta-\alpha }\right)
\left(  1 -O(\delta^{-1})\right)
 \geq \lambda_{\alpha} |1-G^{\beta -\alpha }|^{p-2}(1-G^{\beta -\alpha }).
\end{equation}

By assumptions, in both cases $p<n$ and $n<p$, we have that $ G^{\beta -\alpha}(x)\to 0$ as $|x|\to \infty $. Thus, condition (\ref{subsol1inf}) can be written as
\begin{equation}\label{subsol1infre}
(\lambda_{\alpha} +AG^{\beta -\alpha} +o(G^{\beta -\alpha }))\left(1+ O(\delta ^{-1})\right)
\le \lambda_{\alpha}(1 +(p-1)G^{\beta -\alpha }+o(G^{\beta -\alpha})),
\end{equation}
while condition  (\ref{subsol1infbis}) can be written as
\begin{equation}\label{subsol1infrebis}
(\lambda_{\alpha} -AG^{\beta -\alpha} +o(G^{\beta -\alpha }))\left(1- O(\delta ^{-1}) \right)
\geq \lambda_{\alpha}(1 -(p-1)G^{\beta -\alpha }+o(G^{\beta -\alpha})),
\end{equation}
where in both cases $A=(p-2)\lambda_{\alpha }\beta/\alpha+\lambda_{\beta}\alpha^{p-2}/\beta^{p-2}$ is the same constant appearing in the proof of Lemma~\ref{subsol}. As it was noted in the proof of Lemma~\ref{subsol}, if $(p-1)/p\le \alpha <\beta $, then $A<(p-1)\lambda_{\alpha}$. However, it can be easily seen that $A<(p-1)\lambda_{\alpha}$ also if $0<\beta <\alpha \le (p-1)/p$. It follows that in order to verify the validity of conditions (\ref{subsol1infre}) and (\ref{subsol1infrebis}) for $|x|$ large enough, it suffices to verify that  $O(\delta ^{-1}) G^{\alpha -\beta }=o(1)$  as $|x|\to \infty$. This condition is satisfied because $G(x)$ is asymptotic to $\delta(x)^{\frac{p-n}{p-1}}$ as $|x|\to \infty$ and
   $|\alpha -\beta |<|(p-1)/(p-n)| $. \end{proof}
\section{Upper bounds and existence of minimizers}\label{sec4}
\noindent Using the results of the previous section, we can prove the following existence result for bounded domains. Note that,  assuming that $\Omega $ is of class $C^{1,\gamma}$ as we do here, would allow to
skip a few steps in our proof. However, we prefer to write down more details which explain how our method could be adapted to more general situations as described in Theorem~\ref{mainboundedweak}, see
Remark~\ref{remweak} below.
\begin{thm}\label{mainbounded} Let $\Omega $ be a bounded domain in $\R^n$ of class $C^{1,\gamma}$ with $\gamma \in ]0,1]$. Then
$\lambda_{p,\infty}(\Omega)=c_p$. Moreover, if  $H_p(\Omega )< c_p$, then there exists a positive minimizer $u\in W^{1,p}_0(\Omega )$ for (\ref{ray}). In particular, if $\alpha \in \,](p-1)/p, 1[$ is such that $\lambda_{\alpha}=H_p(\Omega )$,  then \begin{equation}
\label{mainboundedeq}
0<u(x)\le C\delta^{\alpha} (x) \qquad \forall x\in \Gw.
\end{equation}
\end{thm}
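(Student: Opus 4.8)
The plan is to follow the route indicated in the introduction: first compute $\lambda_{p,\infty}(\Omega)$, then use the resulting spectral gap to obtain criticality and an Agmon ground state, and finally control this ground state by a supersolution of sharp decay. Throughout, write $V:=-H_p(\Omega)\delta^{-p}$. Since $\Omega$ is a bounded $C^1$ domain, the $L^p$ Hardy inequality holds, so $Q_V\geq 0$ in $\Omega$ and $\widetilde W^{1,p}(\Omega)=W^{1,p}_0(\Omega)$; moreover $0<H_p(\Omega)\leq c_p$. Because $\Omega$ is bounded with $C^{1,\gamma}$ boundary, the $p$-Laplacian is subcritical in $\Omega$ (Section~\ref{sec2}), hence it has a positive minimal Green function $G$; fixing a relative neighbourhood $U$ of $\partial\Omega$ avoiding the pole, $G$ is positive and $p$-harmonic in $\Omega\cap U$, lies in $C^{1,\tilde\gamma}(\overline{\Omega\cap U})$ for some $\tilde\gamma\in(0,\gamma)$, vanishes on $\partial\Omega$, and satisfies $\nabla G\neq 0$ on $\partial\Omega$ by the Hopf lemma. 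This $G$ is the function fed to Lemma~\ref{subsol} (with $\tilde\gamma$ playing the role of $\gamma$).

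I would first prove $\lambda_{p,\infty}(\Omega)=c_p$. For the bound $\lambda_{p,\infty}(\Omega)\geq c_p$, apply Lemma~\ref{subsol} with $\alpha=(p-1)/p$ (so that $\lambda_\alpha=c_p$) and $\beta\in\,](p-1)/p,\min\{(p-1)/p+\tilde\gamma,1\}[$: then $G^\alpha-G^\beta$ is a positive supersolution of $-\Delta_pv=c_p\delta^{-p}\mathcal I_pv$ in $\Omega\cap\mathcal U$ for some relative neighbourhood $\mathcal U\subset U$ of $\partial\Omega$, and choosing $K\Subset\Omega$ with $\Omega\setminus\bar K\subset\mathcal U$ exhibits $c_p$ as an admissible $\lambda$ in (\ref{lambdainfinity}). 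For the bound $\lambda_{p,\infty}(\Omega)\leq c_p$, fix $\lambda>c_p$ and any relative neighbourhood $\mathcal U$ of $\partial\Omega$; by Allegretto--Piepenbrink it suffices to find $\varphi\in C_c^\infty(\Omega\cap\mathcal U)$ with $\int_\Omega|\nabla\varphi|^p\dx<\lambda\int_\Omega\delta^{-p}|\varphi|^p\dx$. Take $\varphi$ concentrated near a boundary point, built from the standard truncated powers $g_\varepsilon$ of $t^{(p-1)/p}$ (whose one-dimensional Rayleigh quotient tends to $c_p$) composed with $G$, or, after flattening $\partial\Omega$ by a $C^1$ chart, with the distance to the tangent hyperplane; using $G\sim\delta$ from Lemma~\ref{general}, a scaling computation shows the Rayleigh quotient of $\varphi$ tends to $c_p$ as $\varepsilon\to0$ and the support shrinks, hence is $<\lambda$ for a suitable choice, a contradiction.

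Assume now $H_p(\Omega)<c_p$. Then $H_p(\Omega)<c_p=\lambda_{p,\infty}(\Omega)$, i.e.\ $Q_V$ has a spectral gap; Lemma~\ref{pinchcriterion} gives that $Q_V$ is critical in $\Omega$, and Theorem~\ref{greenagmon}(B) furnishes a positive Agmon ground state $u$, i.e.\ a positive solution of (\ref{classiceq}) in $\Omega$ of minimal growth near $\partial\Omega$. For the decay estimate, since $\alpha\mapsto\lambda_\alpha$ maps $[(p-1)/p,1]$ decreasingly onto $[0,c_p]$ and $0<H_p(\Omega)<c_p$, there is a unique $\alpha\in\,](p-1)/p,1[$ with $\lambda_\alpha=H_p(\Omega)$; pick $\beta\in\,]\alpha,\min\{\alpha+\tilde\gamma,1\}[$. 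By Lemma~\ref{subsol}, $w:=G^\alpha-G^\beta$ is a positive supersolution of $Q_V(v)=0$ in $\Omega\cap\mathcal U$. Choosing a smooth $\mathcal K$ with $K\subset\mathcal K\Subset\Omega$ and $\Omega\setminus\bar{\mathcal K}\subset\mathcal U$, the positive continuous functions $u$ and $w$ satisfy $u\leq Cw$ on the compact set $\partial\mathcal K$ for some $C>0$, and $Cw$ is again a positive supersolution of $Q_V(v)=0$ in $\Omega\setminus\bar{\mathcal K}$; the minimal-growth property of $u$ then gives $u\leq Cw$ on $\Omega\setminus\bar{\mathcal K}$. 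Since $G\leq C'\delta$ near $\partial\Omega$ (Lemma~\ref{general}), $w\leq G^\alpha\leq C'\delta^\alpha$, hence $u\leq C''\delta^\alpha$ on $\Omega\setminus\bar{\mathcal K}$; on the compact set $\overline{\mathcal K}$, $u$ is bounded and $\delta$ bounded below, so after enlarging the constant we obtain (\ref{mainboundedeq}).

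It remains to see that $u\in W^{1,p}_0(\Omega)$ and that $u$ is a minimizer. Since $\alpha>(p-1)/p$ we have $(\alpha-1)p>-1$, and for a bounded $C^1$ domain $\int_\Omega\delta^{(\alpha-1)p}\dx<\infty$; combined with (\ref{mainboundedeq}) this gives $u\in L^p(\Omega;\delta^{-p})$. For the gradient, scaled interior regularity estimates for $-\Delta_pu=H_p(\Omega)\delta^{-p}u^{p-1}$ applied on the balls $B_{\delta(x)/2}(x)$ — on which, by (\ref{mainboundedeq}), the right-hand side is bounded by $C\delta(x)^{\alpha(p-1)-p}$ — yield $|\nabla u(x)|\leq C\delta(x)^{\alpha-1}$ near $\partial\Omega$, whence $\int_\Omega|\nabla u|^p\dx<\infty$ as well, so $u\in\widetilde W^{1,p}(\Omega)=W^{1,p}_0(\Omega)$. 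Testing (\ref{classiceq}) with $u$, justified by approximation in $W^{1,p}_0(\Omega)$ together with the Hardy inequality to pass to the limit in the zero-order term, gives $\int_\Omega|\nabla u|^p\dx=H_p(\Omega)\int_\Omega\delta^{-p}|u|^p\dx$, so the Rayleigh quotient of the admissible function $u$ equals $H_p(\Omega)$ and $u$ is a positive minimizer for (\ref{ray}). I expect the main obstacles to be the upper bound $\lambda_{p,\infty}(\Omega)\leq c_p$ — whose concentration/scaling argument must go through with merely $C^1$ boundary, where there are no tubular coordinates and $\delta$ need not be differentiable — and the membership $u\in\widetilde W^{1,p}(\Omega)$, which is precisely where Lemma~\ref{subsol}, and through it the geometric Lemma~\ref{general}, is essential, producing a supersolution with the sharp decay $\delta^\alpha$ against which the minimal-growth ground state can be compared.
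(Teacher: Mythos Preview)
Your proposal is correct and follows the same overall architecture as the paper's proof: compute $\lambda_{p,\infty}(\Omega)=c_p$ via the Green function $G$ and a boundary test-function argument, invoke Lemma~\ref{pinchcriterion} to obtain an Agmon ground state $u$, and then compare $u$ with the supersolution $G^\alpha-G^\beta$ from Lemma~\ref{subsol} to get the sharp decay.

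Two technical points are handled differently from the paper. First, for $\lambda_{p,\infty}(\Omega)\geq c_p$ you apply Lemma~\ref{subsol} directly at $\alpha=(p-1)/p$ to get a supersolution for $\lambda=c_p$; the paper instead uses Lemma~\ref{devpinlem} to produce a supersolution for $c_p-\varepsilon$ and lets $\varepsilon\to 0$. Your route is slightly cleaner. Second, and more substantively, for $\nabla u\in L^p(\Omega)$ the paper uses an elementary truncation argument: test the equation with $u_\varepsilon=F_\varepsilon\circ u$ (a Lipschitz cutoff of $u$ at small values, compactly supported since $u$ vanishes continuously at $\partial\Omega$ by the decay bound) and pass to the limit to obtain $\int_\Omega|\nabla u|^p\dx\leq H_p(\Omega)\int_\Omega \delta^{-p}u^p\dx$ directly. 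You instead invoke scaled interior $C^1$ estimates for the $p$-Laplacian on balls $B_{\delta(x)/2}(x)$ to get the pointwise bound $|\nabla u(x)|\leq C\delta(x)^{\alpha-1}$. Your approach is valid (the rescaling makes both $\|w\|_{L^\infty}$ and the $L^\infty$ norm of the right-hand side $O(1)$, so the standard DiBenedetto--Tolksdorf estimate applies uniformly) and yields a stronger pointwise conclusion; the paper's truncation is more self-contained and avoids appealing to the regularity theory. Either method suffices.
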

\begin{proof} Let  $\tilde x_0\in \Omega$ and  $G$ be the positive minimal Green function in $\Gw$ of the $p$-Laplacian with pole at $\tilde x_0$. Recall that
since $\Omega $ is of class $C^{1,\gamma}$, then $G$ is of class $C^{1,\tilde \gamma }$ away from $\tilde x_0$ and up to $\partial \Omega$, for some $\tilde \gamma \in (0,\gamma )$, and $G(x)=0$,  $\nabla G(x)\ne 0$ for all $x\in \partial \Omega$ by the Hopf lemma (see Section~\ref{sec2}).
Thus, $G$ satisfies equality (\ref{general1}) for all $x_0\in \partial \Omega$. In particular, choosing  $\alpha =(p-1)/p$ in  Lemma~\ref{devpinlem}, we have that
$G^{\alpha }$ satisfies (\ref{asymlem1}) with $\lambda_{\alpha} =c_p$ and $c=1$.

Since  $\partial \Omega$ is compact, it follows that $G^{(p-1)/p}$ is a supersolution to the equation $-\Delta_pv-(c_p-\varepsilon  ){\mathcal{I}}_pv/\delta^p=0  $ in a relative neighbourhood of $\partial \Omega$. By passing to the limit as $\varepsilon \to 0$ and using definition (\ref{lambdainfinity}), we get that $\lambda_{p,\infty}(\Omega)\geq c_p$. 

\medskip

On the other hand, since $\Omega $ is of class  $C^{1}$, any point at the boundary has a tangent hyperplane, hence locally around any  fixed point at the boundary it is possible to apply  the same argument of \cite[Theorem~5]{mamipi} and conclude that $\lambda_{p,\infty}(\Omega)\le c_p$. 

More precisely,  
let $P\in \partial \Omega$ be fixed and $\Pi$ be the tangent hyperplane at $\partial\Omega$ in $P$. We claim  that condition (2.2) in  \cite[Theorem~5]{mamipi} is satisfied, that is, for all $x\in \Omega$ in a suitable neighborhood of $P$  we have
\begin{equation}
\label{tangent}
|d(x, \Pi)-\delta (x)|\le o(1)d(x,P)
\end{equation}
where $o(1) $ is a quantity which tends to zero as $x\to P$. To prove \eqref{tangent} we argue as follows.
 Since $\Omega$ is of class  $C^1$, we can  assume without loss of generality that $P=0$ (the origin of the coordinate system), $\Pi =\{x\in {\mathbb{R}}^n:\ x_n=0  \}$ and that     there exists an open neighborhood $U$ of $P$ such that 
$$
\Omega\cap U =\{ (\bar x, x_n)\in {\mathbb{R}}^n:\ \bar x\in \Pi_{i=1}^{n-1}]a_i, b_i[,\ a_n<x_n<g(\bar x)     \},
$$
for suitable real numbers $a_i,b_i$, where $g$ is a function of class $C^1(\Pi_{i=1}^{n-1}]a_i, b_i[)$ such that $g(0)=0$ and $\nabla g (0)=0$. 
Given  $x=(\bar x, x_n)\in \Omega \cap U$, we set $r_x=d(x, P)=|x|$ and  $L_x=\sup_{z\in B(x, 2r_x)\cap \Omega }|\nabla g(\bar z)|$. We note that $L_x$ is well-defined for $x$ sufficiently close to $P$ and that $\delta (x)=\inf_{y\in  B(x, 2r_x)\cap \Omega  }d(x,  (\bar y, g(\bar y) )$. 
Thus, for any $y\in  B(x, 2r_x)\cap  \Omega $ we have
$$
|x_n-g(\bar x)|\le |x_n-g(\bar y)|+|g(\bar y)-g(\bar x)  |\le  |x_n-g(\bar y)|+L_x|\bar x-\bar y|\le (1+L_x)d(x,  (\bar y, g(\bar y) )
$$
hence
\begin{equation}
\label{tangent1}
|x_n-g(\bar x)|\le  (1+L_x)\inf_{y\in  B(x, 2r_x)\cap \Omega  }d(x,  (\bar y, g(\bar y) )=(1+L_x)\delta(x).
\end{equation}
It follows from \eqref{tangent1} that 
$$
|x_n|\le |x_n-g(\bar x)|+|g(\bar x)|\le (1+L_x)\delta (x)+L_x|\bar x|\le (1+L_x)\delta (x)+L_xd(x, P)
$$
hence 
\begin{equation}
\label{tangent2}
|x_n|-\delta (x)\le L_x\delta (x)+L_xd(x, P)\le 2L_xd(x, P).
\end{equation}
On the other hand, 
\begin{equation}
\label{tangent3}
\delta (x)\le |x_n-g(\bar x)| \le |x_n|+L_x|\bar x|\le |x_n|+L_xd(x, P).
\end{equation}
In conclusion, combining \eqref{tangent2} and \eqref{tangent3} we get
\begin{equation}
\label{tangent4} 
||x_n|-\delta (x))  |\le 2L_xd(x, P)
\end{equation}
where $L_x\to 0$ as $x\to P$ since $\nabla g$ is continuous. Thus condition \eqref{tangent} is satisfied. 

Now, condition \eqref{tangent} (together with the fact that a segment perpendicular to $\Pi$ is contained in $\Omega$, which is clearly satisfied in our case)
is used in \cite[Theorem~5]{mamipi}  to prove that  for any open neighbourhood  $V$ of $P$ and  any $\varepsilon>0$ there exists a function $\varphi\in C^{\infty }_c(V\cap \Omega ) $ such that  
$$
 \frac{\int_{\Omega} |\nabla \varphi|^p\dx}{\int_{\Omega }\frac{|\varphi |^p}{\delta ^p}\dx}\le (1+\varepsilon)(c_p+\varepsilon)
$$
which allows us to conclude that the admissible numbers  $\lambda$ in \eqref{lambdainfinity} satisfy $\lambda \le  (1+\epsilon)(c_p+\varepsilon)$   for any $\varepsilon >0$  (recall the Allegretto-Piepenbrink theory mentioned at the beginning of Section~\ref{sec2}). Hence,   $\lambda_{p,\infty}(\Omega)\le c_p$.
This proves that $\lambda_{p,\infty}(\Omega)= c_p$.

\medskip

We assume now that $H_p(\Omega )< c_p$ and prove the existence of a minimizer for (\ref{ray}). First of all we note that since $H_p(\Omega)<\lambda_{p,\infty }(\Omega)$, Lemma~\ref{pinchcriterion} implies that the positive function of minimal growth $u \in {\mathcal{M}}^{Q_{-H_p(\Omega )\delta^{-p}}}_{\Omega , \{x_0\}}$ is an Agmon ground state. 

\medskip

We now prove that $u\in L^p(\Omega ; \delta^{-p})$.
  Since $\lambda_{\alpha} = c_p $ if $\alpha =(p-1)/p$ and $H_p(\Omega )< c_p$, we can choose $\tilde \alpha >(p-1)/p$  close enough to $(p-1)/p$ so that $\lambda_{\tilde \alpha} >H_p(\Omega )$. Note that this choice of $\tilde \alpha$ implies that $G^{\tilde \alpha}\in L^p(\Omega, \delta^{-p})$.
As above, using (\ref{asymlem1}) and the compactness of $\partial\Omega$ it follows that the function $G^{\tilde \alpha }$ is a supersolution to the equation $-\Delta_pv-(   \lambda_{\tilde \alpha}  -\varepsilon  ){\mathcal{I}}_pv/\delta^p=0  $ in a relative neighbourhood of $\partial \Omega$. Hence, in such a neighbourhood
\begin{eqnarray}
\left(-\Delta_p-H_p(\Omega )\frac{{\mathcal{I}}_p}{\delta^p}\right)G^{\tilde \alpha}\geq   \left(-\Delta_p-(\lambda_{\tilde \alpha } -\varepsilon )\frac{{\mathcal{I}}_p}{\delta^p}\right)G^{\tilde \alpha}  \geq 0,
\end{eqnarray}
provided that $\varepsilon >0 $ is small enough to guarantee that $H_p(\Omega ) \le \lambda_{\tilde \alpha} -\varepsilon $.  Thus, $G^{\tilde \alpha}$ is a positive supersolution to the equation $-\Delta_pv-H_p(\Omega )\frac{{\mathcal{I}}_pv}{\delta^p}=0$ in a relative neighbourhood of $\partial \Omega $. Therefore,  the ground state $u$ satisfies the condition $0<u\le k G^{\tilde \alpha}$ in a relative neighbourhood of $\partial \Omega$ for a suitable positive constant $k$. This implies that $u\in L^p(\Omega, \delta^{-p})$.

\medskip

We now prove that $\nabla u\in L^p(\Omega )$. Note that since $u\le k G^{\tilde \alpha}$ in a relative neighbourhood of $\partial \Omega$, we have that
$u(x)\to 0$ as $x\to \partial \Omega$, hence $u$ is continuous up to the boundary of $\Omega$. Then we use a standard truncation argument as follows. For any $\varepsilon>0$ we consider the real-valued  function $F_{\varepsilon }$ defined on $[0, \infty [$ by setting $F_{\varepsilon}(x)=0$ if $0\le x<\varepsilon /2$,
$F_{\varepsilon }(x)= 2x-\varepsilon $ if  $\varepsilon /2<x<\varepsilon $, $F_{\varepsilon }(x)=x$ if $x\geq \varepsilon$. Moreover, we set $u_{\varepsilon }=F_{\varepsilon }\circ u$. Since  $u_{\varepsilon }$ has compact support in $\Omega$, it can be used as a test function in the weak formulation of the problem solved by $u$, namely
\begin{equation}\label{agmonweak}
\int_{\Omega}|\nabla u|^{p-2}\nabla u\nabla \varphi \dx= H_p(\Omega )\int_{\Omega }\frac{|u|^{p-2}u\varphi }{\delta ^p}\dx,
\end{equation}
where one can see by a standard approximation argument that it is possible to choose not only test functions $\varphi \in C^{\infty }_c(\Omega )$ but
 also functions in $W^{1,p }(\Omega )$ with compact support. Plugging $u_{\varepsilon }$ in (\ref{agmonweak}) we get
\begin{equation}
\label{agmonweak1}
\int_{\{x\in \Omega:\ u(x)\geq \varepsilon  \}}|\nabla u|^{p} \dx + 2\int_{\{x\in \Omega:\ \varepsilon /2 <u(x)< \varepsilon  \}}|\nabla u|^{p} \dx =
H_p(\Omega )\int_{\Omega }\frac{|u|^{p-2}uu_{\varepsilon } }{\delta ^p}\dx
\end{equation}
which in particular yields
\begin{equation}
\label{agmonweak2}
\int_{\{x\in \Omega:\ u(x)\geq \varepsilon  \}}|\nabla u|^{p} \dx \le H_p(\Omega )\int_{\Omega }\frac{|u|^{p-2} uu_{\varepsilon}   }{\delta ^p}\dx .
\end{equation}
Finally, passing to the limit in (\ref{agmonweak2}) as $\varepsilon \to 0$, we get that
\begin{equation}
\label{agmonweak3}
\int_{\Omega }|\nabla u|^{p} \dx \le H_p(\Omega )\int_{\Omega }\frac{|u|^{p} }{\delta ^p}\dx <\infty  .
\end{equation}
as required. Thus $u\in W^{1,p}_0(\Omega)$  since the  Sobolev norm of $u$ is finite and $u$ vanishes at the boundary of $\Omega$.

\medskip

In order to prove estimate (\ref{mainboundedeq}) we proceed as follows. Let $\alpha $ be as in the statement and let $\beta \in (0,1)$ be such
that $\alpha <\beta  <\alpha +\tilde \gamma$. Then we can apply Lemma~\ref{subsol} and conclude that in a suitable relative neighbourhood of $\partial \Omega $
 \begin{eqnarray}
   \left(-\Delta_p-H_p(\Omega) \frac{{\mathcal{I}}_p}{\delta^p}\right)(G^{\alpha}-G^{\beta}) =   \left(-\Delta_p-\lambda_{\alpha } \frac{{\mathcal{I}}_p}{\delta^p}\right)(G^{\alpha}-G^{\beta})  \geq 0.
\end{eqnarray}
Thus $G^{\alpha}-G^{\beta }$ is a positive supersolution to the equation $-\Delta_pv-H_p(\Omega )\frac{{\mathcal{I}}_pv}{\delta^p}=0$ in a relative neighbourhood of $\partial \Omega $. Since $u$ is a positive solution of minimal growth in a neighbourhood of infinity in $\Omega$, it follows that $u$ satisfies $u\le C (G^{\alpha}-G^{\beta})$ in a relative neighbourhood of $\partial \Omega$ for a suitable positive constant $C$. Since $G(x)$ is asymptotic to $\delta (x)$ as $x\to \partial \Omega$, we deduce the validity of   (\ref{mainboundedeq}).
\end{proof}
\begin{rem}\label{remweak} {\em In the proof of Theorem~\ref{mainbounded}, the assumption  $ \Omega\in C^{1,\gamma}$ was used in a substantial way only to prove the validity of (\ref{mainboundedeq}), and to establish the upper bound
$\lambda_{p,\infty }(\Omega)\le c_p$.  Note  that $\lambda_{p,\infty }(\Omega)\le c_p$ holds provided there exists one point  $z \in \partial \Gw$ which admits a tangent hyperplane in the sense of \cite[Theorem~5]{mamipi}.

On the other hand, the proof of inequality $\lambda_{p,\infty}(\Omega) \geq c_p$ and  the proof of the existence of a minimizer in $W^{1,p}_0(\Omega )$ under the condition  $H_p(\Omega)<c_p$,  rely only on the assumption that $\Omega $ is of class $ C^1$ and on the  existence of a $p$-harmonic function $u$ defined in a relative neighbourhood of $\partial \Omega$ such that $u(x)=0$ and $\nabla u(x)\ne 0$ for all $x\in \partial \Omega$. Under these weaker assumptions, it was also proved that a slightly weaker estimate holds for the positive minimizer $u$. Namely, estimate (\ref{mainboundedeq}) holds with the power $\alpha $ replaced by any power $\tilde\alpha$ smaller than $\alpha$.  We recall in particular that the condition $\nabla u(x)\ne 0$ for all $x\in \partial \Omega $ is guaranteed by the Hopf lemma which holds under weaker assumptions on $\partial \Omega $, for example under the assumption that
$ \Omega$ is of class  $C^{1, {\rm Dini}}$, see \cite{miksha}.  Recall also that the Hopf lemma does not hold in general  under the sole assumption that $\Omega$ is of class $C^1$, see e.g., \cite[\S~3.2]{gitr}.
 }
\end{rem}

Following the observations of the previous remark, we can state the following variant of the previous theorem.
\begin{thm}\label{mainboundedweak} Let $\Omega $ be a bounded domain in $\R^n$ of class $C^{1}$ such that the Hopf lemma holds for the $p$-Laplacian (namely,  any positive $p$-harmonic function $u$ defined in a relative neighbourhood of $\partial \Omega$ such that    $u=0$ on $\partial \Omega$ satisfies $\nabla u(x)\ne 0$ for all $x\in \partial \Omega$).      Then
$\lambda_{p,\infty}(\Omega)\geq c_p$. Moreover, if  $H_p(\Omega )< c_p$,
then there exists a positive minimizer $u\in W^{1,p}_0(\Omega )$ for (\ref{ray}). In particular, if $\alpha \in \, ](p-1)/p, 1[$ is such that $\lambda_{\alpha}=H_p(\Omega )$, then
for any $\varepsilon>0$ there exists $C_{\varepsilon}>0$  such that
$$0<u(x)\le C_{\varepsilon }\delta^{\alpha -\varepsilon} (x) \qquad \forall x\in \Gw. $$
\end{thm}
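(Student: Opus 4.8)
The plan is to follow the proof of Theorem~\ref{mainbounded} essentially verbatim, replacing each appeal to the $C^{1,\gamma}$ regularity by the weaker hypothesis that $\Omega$ is of class $C^1$ and that the Hopf lemma holds for $p$-harmonic functions on $\Omega$. As observed in Remark~\ref{remweak}, the only places where the stronger regularity entered the proof of Theorem~\ref{mainbounded} were (i) the upper bound $\lambda_{p,\infty}(\Omega)\le c_p$, which is not claimed here, and (ii) the sharp decay estimate \eqref{mainboundedeq}, which is weakened here to the estimate with exponent $\alpha-\varepsilon$. Everything else goes through unchanged.

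First I would fix $\tilde x_0\in\Omega$ and let $G$ be the positive minimal Green function of the $p$-Laplacian in $\Omega$ with pole at $\tilde x_0$; this exists because, $\partial\Omega$ being compact and nonempty, the $p$-Laplacian is subcritical in $\Omega$ (see Section~\ref{sec2}). Away from $\tilde x_0$, $G$ is $p$-harmonic and $C^{1,\alpha}$ up to $\partial\Omega$ (since $\partial\Omega\in C^1$ and the potential vanishes near the boundary — here one invokes the boundary regularity theory for the $p$-Laplacian, which only needs $C^1$ or even less), $G=0$ on $\partial\Omega$, and by the Hopf lemma assumption $\nabla G(x)\ne 0$ for all $x\in\partial\Omega$. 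Hence Lemma~\ref{general} applies and $G$ satisfies \eqref{general1} at every $x_0\in\partial\Omega$. Choosing $\alpha=(p-1)/p$ in Lemma~\ref{devpinlem} with $d=\delta$ and $c=1$ gives that $G^{(p-1)/p}$ satisfies \eqref{asymlem1} with $\lambda_\alpha=c_p$; by compactness of $\partial\Omega$ this yields, after letting $\varepsilon\to 0$ and using definition \eqref{lambdainfinity}, that $\lambda_{p,\infty}(\Omega)\ge c_p$. Next, assuming $H_p(\Omega)<c_p=\lambda_{p,\infty}(\Omega)$, Lemma~\ref{pinchcriterion} shows that the operator $Q_{-H_p(\Omega)\delta^{-p}}$ is critical, so the minimal-growth solution $u\in\mathcal{M}^{Q_{-H_p(\Omega)\delta^{-p}}}_{\Omega,\{x_0\}}$ is an Agmon ground state. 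To see $u\in L^p(\Omega;\delta^{-p})$, pick $\tilde\alpha>(p-1)/p$ close enough to $(p-1)/p$ that $\lambda_{\tilde\alpha}>H_p(\Omega)$ and $G^{\tilde\alpha}\in L^p(\Omega;\delta^{-p})$; by \eqref{asymlem1} and compactness, $G^{\tilde\alpha}$ is a positive supersolution of $-\Delta_pv-H_p(\Omega)\delta^{-p}\mathcal{I}_pv=0$ near $\partial\Omega$, so minimal growth forces $0<u\le kG^{\tilde\alpha}$ there, whence $u\in L^p(\Omega;\delta^{-p})$. The truncation argument of Theorem~\ref{mainbounded} (using that $u$ is continuous up to $\partial\Omega$ with $u=0$ on $\partial\Omega$, and testing the Euler–Lagrange equation against $F_\varepsilon\circ u$) then gives $\int_\Omega|\nabla u|^p\le H_p(\Omega)\int_\Omega|u|^p\delta^{-p}<\infty$, so $u\in W^{1,p}_0(\Omega)$ is the desired minimizer.

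Finally, for the decay estimate with exponent $\alpha-\varepsilon$: given $\varepsilon>0$ and $\alpha\in\,](p-1)/p,1[$ with $\lambda_\alpha=H_p(\Omega)$, set $\tilde\alpha:=\alpha-\varepsilon$ (shrinking $\varepsilon$ so $\tilde\alpha>(p-1)/p$). Since $\lambda_\cdot$ is increasing on $[(p-1)/p,1]$ and $\lambda_\alpha=H_p(\Omega)$, we have $\lambda_{\tilde\alpha}<H_p(\Omega)$; but applying Lemma~\ref{devpinlem} with $\alpha$ replaced by $\tilde\alpha$ gives, for any $\eta>0$, that $G^{\tilde\alpha}$ is a supersolution of $-\Delta_pv-(\lambda_{\tilde\alpha}-\eta)\delta^{-p}\mathcal{I}_pv=0$ near $\partial\Omega$ — and since $\lambda_{\tilde\alpha}<H_p(\Omega)$ this is \emph{not} directly enough. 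The correct choice is instead $\tilde\alpha$ slightly \emph{larger}: more precisely, one uses that the map $\alpha\mapsto\lambda_\alpha$ is continuous, so for the exponent in \eqref{asymlem1} to dominate $H_p(\Omega)$ one wants $\lambda_{\tilde\alpha}\ge H_p(\Omega)$, i.e.\ $\tilde\alpha\le\alpha$; taking $\tilde\alpha=\alpha-\varepsilon/2$ and $\eta>0$ small with $\lambda_{\tilde\alpha}-\eta\ge H_p(\Omega)$ (possible since $\lambda_{\tilde\alpha}>\lambda_\alpha=H_p(\Omega)$ is false — rather $\lambda_{\tilde\alpha}<\lambda_\alpha$). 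Here is the main subtlety and the step I expect to be the real obstacle: one cannot use $G^{\tilde\alpha}$ alone as a supersolution because without $C^{1,\gamma}$ control the error term in \eqref{general1,5} is only $o(1)/\delta$, not $O(\delta^{\gamma-1})/\delta$, so the two-term combinations $G^\alpha-G^\beta$ of Lemma~\ref{subsol} are unavailable. The resolution is to run Lemma~\ref{devpinlem} with the single power $G^{\tilde\alpha}$ where $\tilde\alpha>(p-1)/p$ is chosen so that $\lambda_{\tilde\alpha}>H_p(\Omega)$ — which for $\tilde\alpha$ between $(p-1)/p$ and $\alpha$ holds since $\lambda$ is increasing there — hence $G^{\tilde\alpha}$ is a supersolution of $-\Delta_pv-H_p(\Omega)\delta^{-p}\mathcal{I}_pv=0$ near $\partial\Omega$, minimal growth gives $0<u\le CG^{\tilde\alpha}$, and since $G$ is asymptotic to $\delta$ and $\tilde\alpha$ can be taken as close to $\alpha$ as we wish (hence $\ge\alpha-\varepsilon$), we obtain $0<u(x)\le C_\varepsilon\delta^{\alpha-\varepsilon}(x)$ for all $x\in\Omega$, with the bound extended to all of $\Omega$ by continuity and positivity of $u$ on the compact complement of the neighbourhood. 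This completes the proof.
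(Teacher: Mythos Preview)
Your overall strategy is correct and is exactly what the paper intends: Theorem~\ref{mainboundedweak} has no separate proof in the paper, it is simply the content of Remark~\ref{remweak} applied to the proof of Theorem~\ref{mainbounded}. Your first two paragraphs (the lower bound $\lambda_{p,\infty}\ge c_p$, criticality via Lemma~\ref{pinchcriterion}, the supersolution $G^{\tilde\alpha}$ giving $u\in L^p(\Omega;\delta^{-p})$, and the truncation argument for $\nabla u\in L^p$) reproduce the paper's argument accurately.

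However, your final paragraph contains a genuine sign error in the monotonicity of $\lambda_\alpha$, and the confused back-and-forth there stems from it. You write that ``$\lambda$ is increasing'' on $[(p-1)/p,1[$, but as stated just before Lemma~\ref{devpinlem}, $\lambda_\alpha=(p-1)\alpha^{p-1}(1-\alpha)$ is \emph{decreasing} on $[(p-1)/p,1]$ (it attains its maximum $c_p$ at $\alpha=(p-1)/p$). With the correct monotonicity the argument is clean and there is no ``subtlety'' or ``obstacle'': for any $\tilde\alpha\in\,](p-1)/p,\alpha[$ one has $\lambda_{\tilde\alpha}>\lambda_\alpha=H_p(\Omega)$, so by Lemma~\ref{devpinlem} and compactness of $\partial\Omega$ the single power $G^{\tilde\alpha}$ is a positive supersolution of $-\Delta_pv-H_p(\Omega)\delta^{-p}\mathcal I_pv=0$ near $\partial\Omega$; minimal growth then gives $u\le C\,G^{\tilde\alpha}\sim C\,\delta^{\tilde\alpha}$, and letting $\tilde\alpha\uparrow\alpha$ yields the estimate with exponent $\alpha-\varepsilon$. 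Your eventual ``resolution'' is exactly this, but the stated justification (``since $\lambda$ is increasing there'') is backwards and would, if true, give $\lambda_{\tilde\alpha}<H_p(\Omega)$, defeating the argument. Fix the monotonicity and delete the exploratory detour.
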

We can also consider the case of exterior domains. Recall that  $c^*_{p,n}=|\frac{p-n}{p}|^p$ and $c_{p,n}=\min\{c_p, c^*_{p,n} \}$.  It is well known that
if $p=n$, then $H_p(\Gw)=\lambda_{p,\infty}(\Omega)=0$ \cite{mamipi}. Therefore, in the following theorem we consider the case $p\neq n$.
\begin{thm}\label{mainexterior} Let  $\Omega $ be an unbounded domain in $\R^n$ with nonempty compact boundary of class $C^{1,\gamma}$ with $\gamma \in ]0,1]$, and let $p\neq n$. Then $\lambda_{p,\infty}(\Omega)= c_{p,n}$.

Moreover,  if  $H_p(\Omega )<  c_{p,n}$,
then there exists a positive  minimizer $u\in \widetilde W^{1,p}(\Omega )$  for (\ref{ray}).  Finally,  let $\alpha , \alpha_1 \in \, ](p-1)/p, 1[$ and $\alpha_2 \in\, ]0,(p-1)/p[$ be  such that $\lambda_{\alpha}=H_p(\Omega)$,  $ \lambda_{\alpha_1}=\lambda_{\alpha_2 }=|(p-1)/(p-n)   |^p H_p(\Omega )$.  Then there exists
$C>0$, an open neighbourhood ${\mathcal{U}}$ of $\partial\Omega $, and $M>0$  such that $u$ satisfies the following estimates:
\begin{itemize}
\item[(i)] $ 0<u(x)\le C\delta^{\alpha} (x)$  for all  $x\in  \Omega \cap  {\mathcal{U}}$.
\item[(ii)] If $p<n$, then $0<u(x)\le C|x|^{\frac{\alpha_1(p-n)}{p-1}}$ for all $|x|>M$.
\item[(iii)] If $p>n$, then  $0<u(x)\le C|x|^{\frac{\alpha_2(p-n)}{p-1}}$ for all $|x|>M$.
\end{itemize}
\end{thm}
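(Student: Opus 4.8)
The plan is to follow the scheme of the proof of Theorem~\ref{mainbounded}, the essential new point being that the behaviour of solutions near the compact boundary $\partial\Omega$ and near infinity must be analysed separately and then patched.

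\emph{Computing the Hardy constant at infinity.} To obtain $\lambda_{p,\infty}(\Omega)\ge c_{p,n}$ I would produce, for every $\lambda<c_{p,n}$, a positive supersolution of $Q_{-\lambda\delta^{-p}}\ge0$ on $\Omega\setminus\bar K$ for a sufficiently large compact $K$. Near $\partial\Omega$ take the positive minimal Green function $G_\partial$ of the $p$-Laplacian in $\Omega$, which by the regularity recalled in Section~\ref{sec2} is $C^{1,\tilde\gamma}$ up to $\partial\Omega$ with $\nabla G_\partial\ne0$ there (Hopf); then Lemma~\ref{general} and Lemma~\ref{devpinlem} (with $\alpha=(p-1)/p$, $d=\delta$, $c=1$) show that $G_\partial^{(p-1)/p}$ is a supersolution of $\bigl(-\Delta_p-(c_p-\varepsilon)\delta^{-p}{\mathcal I}_p\bigr)v\ge0$ in a relative neighbourhood of $\partial\Omega$. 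Near infinity take $G_\infty(x)=|x|^{(p-n)/(p-1)}$: since $\delta(x)/|x|\to1$ and $|(p-n)/(p-1)|^pc_p=c^*_{p,n}$, Lemma~\ref{devpinlem} (with $\alpha=(p-1)/p$, $d=\delta$, $c=|(p-n)/(p-1)|$) shows that $|x|^{(p-n)/p}$ is a supersolution of $\bigl(-\Delta_p-(c^*_{p,n}-\varepsilon)\delta^{-p}{\mathcal I}_p\bigr)v\ge0$ for $|x|$ large. Choosing $\varepsilon$ with $\lambda<\min\{c_p,c^*_{p,n}\}-\varepsilon$ and $K$ so large that $\Omega\setminus\bar K$ splits into a relative neighbourhood of $\partial\Omega$ and a disjoint neighbourhood of infinity (possible since $\partial\Omega$ is compact), the function equal to $G_\partial^{(p-1)/p}$ on the first piece and to $|x|^{(p-n)/p}$ on the second is such a supersolution, so $\lambda_{p,\infty}(\Omega)\ge\lambda$; letting $\lambda\uparrow c_{p,n}$ gives $\ge c_{p,n}$. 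For the opposite inequality I would argue as in \cite[Theorem~5]{mamipi} and in the proof of Theorem~\ref{mainbounded}: any boundary point of the $C^1$ domain has a tangent hyperplane, providing test functions supported in arbitrarily small boundary neighbourhoods with weighted Rayleigh quotient $\le(1+\varepsilon)(c_p+\varepsilon)$, which by Allegretto--Piepenbrink theory forces $\lambda_{p,\infty}(\Omega)\le c_p$; and the classical Hardy inequality on $\R^n\setminus\{0\}$, together with $\delta(x)\sim|x|$ at infinity, provides test functions supported in $\{|x|>R\}$ with weighted Rayleigh quotient $\le c^*_{p,n}+\varepsilon$, forcing $\lambda_{p,\infty}(\Omega)\le c^*_{p,n}$. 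Hence $\lambda_{p,\infty}(\Omega)=c_{p,n}$.

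\emph{The minimizer and the pointwise estimates.} If $H_p(\Omega)<c_{p,n}=\lambda_{p,\infty}(\Omega)$, then Lemma~\ref{pinchcriterion} gives that $Q_{-H_p(\Omega)\delta^{-p}}$ is critical, so the minimal-growth positive solution $u\in{\mathcal M}^{Q_{-H_p(\Omega)\delta^{-p}}}_{\Omega,\{x_0\}}$ is a global positive solution of \eqref{classiceq}. For the estimates I would, as in Theorem~\ref{mainbounded}, construct positive supersolutions of \eqref{classiceq} at each end and compare via minimal growth. Near $\partial\Omega$, Lemma~\ref{subsol} with the given $\alpha$ (note $\lambda_\alpha=H_p(\Omega)$, $\alpha>(p-1)/p$) and any $\beta\in(\alpha,\alpha+\tilde\gamma)$ shows that $G_\partial^{\alpha}-G_\partial^{\beta}$ is a positive supersolution of \eqref{classiceq} near $\partial\Omega$. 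Near infinity, first note that $\alpha_1,\alpha_2$ exist and are uniquely determined, since $\alpha\mapsto\lambda_\alpha$ is strictly monotone on each of $[0,(p-1)/p]$ and $[(p-1)/p,1]$ and $H_p(\Omega)<c^*_{p,n}$ is equivalent to $|(p-1)/(p-n)|^pH_p(\Omega)<c_p$; then Lemma~\ref{subsolinf}(i) (for $p<n$, with $\beta$ slightly above $\alpha_1$) resp.\ Lemma~\ref{subsolinf}(ii) (for $p>n$, with any $\beta\in(0,\alpha_2)$) shows that $G_\infty^{\alpha_i}-G_\infty^{\beta}$ is a positive supersolution of \eqref{classiceq} for $|x|$ large. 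Gluing the boundary and the infinity supersolutions over a large bounded smooth $\mathcal K\Subset\Omega$ (scaling the two pieces separately so that $u\le v$ on $\partial\mathcal K$), minimal growth gives $u\le Cv$ on $\Omega\setminus\bar{\mathcal K}$, and since $G_\partial\sim\delta$ and $G_\infty(x)=|x|^{(p-n)/(p-1)}$ this yields (i), (ii), (iii).

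\emph{Finiteness of the energy and the main obstacle.} Estimates (i)--(iii) already give $u\in L^p(\Omega;\delta^{-p})$: near $\partial\Omega$, $\delta^{\alpha}\in L^p(\delta^{-p})$ because $\alpha>(p-1)/p$; near infinity, $|x|^{\alpha_i(p-n)/(p-1)}\in L^p(\delta^{-p})$ exactly because $\alpha_1>(p-1)/p$ when $p<n$, resp.\ $\alpha_2<(p-1)/p$ when $p>n$. For $\nabla u\in L^p(\Omega)$, if $p<n$ then by (i) and (ii) $u\to0$ both at $\partial\Omega$ and at infinity, so $F_\varepsilon\circ u$ has compact support in $\Omega$ and the truncation computation \eqref{agmonweak1}--\eqref{agmonweak3} of Theorem~\ref{mainbounded} carries over verbatim; if $p>n$, $u$ may grow at infinity, and I would combine the truncation near $\partial\Omega$ with a cutoff $\eta_R$ at infinity, using interior gradient estimates for the $p$-Laplace equation together with the growth bound $u(x)\le C|x|^{\alpha_2(p-n)/(p-1)}$ to get $|\nabla u(x)|\le C|x|^{\alpha_2(p-n)/(p-1)-1}$ for large $|x|$, whose $p$-th power is integrable there precisely because $\alpha_2<(p-1)/p$; the error term produced by $|\nabla\eta_R|^p u^p$ then vanishes as $R\to\infty$ for the same reason. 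Thus $u\in\widetilde W^{1,p}(\Omega)$, and being a positive solution of \eqref{classiceq} in this space it is the minimizer of \eqref{ray}. I expect the case $p>n$ of this last step to be the principal obstacle: unlike for bounded domains, $u$ need not decay---or even be bounded---at infinity, so the naive truncation no longer yields an admissible compactly supported test function, and $\nabla u$ must be controlled at infinity through the growth estimate and elliptic regularity; a secondary, bookkeeping difficulty is to arrange the gluing of the boundary and infinity supersolutions so that they genuinely cover $\Omega\setminus\bar K$ and the minimal-growth comparison can be invoked across both ends simultaneously.
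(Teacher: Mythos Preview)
Your proposal is correct and follows essentially the same route as the paper: compute $\lambda_{p,\infty}(\Omega)$ via Lemma~\ref{devpinlem} at both ends, invoke Lemma~\ref{pinchcriterion} for the ground state, and establish (i)--(iii) by comparing $u$ with the supersolutions $G^{\alpha}-G^{\beta}$ of Lemmas~\ref{subsol} and \ref{subsolinf}.

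The one point worth noting is the step you flag as the principal obstacle, namely $\nabla u\in L^p$ near infinity when $p>n$. The paper does not split into cases or use a spatial cutoff $\eta_R$; instead it observes that the operator $-\Delta_p-H_p(\Omega)\delta^{-p}\mathcal{I}_p$ has a Fuchsian-type singularity at infinity and invokes \cite[Lemma~2.6]{frpi} to obtain the uniform pointwise bound $|\nabla u(x)|\le C\,u(x)/|x|$ for $|x|$ large, valid for all $p\neq n$. Since you already know $u\in L^p(\Omega;\delta^{-p})$ and $\delta(x)\sim |x|$ at infinity, this immediately gives $\nabla u\in L^p$ there. Your hands-on approach (interior gradient estimates plus the growth bound (iii)) amounts to re-deriving this Fuchsian estimate and is perfectly valid, but citing the lemma dissolves the obstacle in two lines and treats $p<n$ and $p>n$ simultaneously. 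Your secondary bookkeeping concern about gluing across the two ends is handled in the paper simply by taking $\mathcal K$ large enough that $\Omega\setminus\bar{\mathcal K}$ has two components, one near $\partial\Omega$ and one near infinity, and applying the minimal-growth comparison on each separately; no actual patching of the supersolutions into a single function is needed.
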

\begin{proof}  Let  $\tilde x_0\in \Omega$ and  let $G$ be a positive function defined on $\Omega$ which coincides in a relative neighbourhood of $\partial \Omega$ with the positive minimal  Green function in $\Gw$ of the $p$-Laplacian with pole at $\tilde x_0$, and such that $G(x)=|x|^{\frac{p-n}{p-1}}$ for all $x$ in a neighbourhood of $\infty $ (note that the specific definition  of $G$ outside such neighbourhoods is irrelevant here).

 Since $G$ satisfies (\ref{general1}) for all $x_0\in \partial\Omega$, we can apply the same argument as in the proof of Theorem~\ref{mainbounded}
to conclude that for any $\alpha\in (0,1)$ and  $\varepsilon>0$ sufficiently small the function $G^{\alpha}$ is a positive supersolution
to the equation
\begin{equation}\label{mainext1}
-\Delta_pv-(\lambda_{\alpha} -\varepsilon  ){\mathcal{I}}_pv/\delta^p=0
\end{equation}
in a neighbourhood of $\partial \Omega$.

We note now  that $\Delta_pG=0$ also in  a neighbourhood of $\infty$ and that $G$ satisfies (\ref{devpinlem1}) with $x_0=\infty $ and $c= |p-n|/(p-1)$.
Thus, by (\ref{asymlem1}) it follows that for any $\alpha\in (0,1)$ and $\varepsilon>0$ sufficiently small the function $G^{\alpha}$ is a supersolution
to the equation
\begin{equation}\label{mainext2} -\Delta_pv-\left(  \left|\frac{p-n}{p-1}\right|^p \lambda_{\alpha}  -\varepsilon  \right)\frac{{\mathcal{I}}_pv}{ \delta^p}=0
\end{equation}
in a neighbourhood of $\infty$.

Recall that for  $\alpha =(p-1)/p$ we have  $\lambda_{\alpha } =c_p$, hence $ \left|\frac{p-n}{p-1}\right|^p \lambda_{\alpha}= c^*_{p,n}$. Thus, choosing $\alpha=(p-1)/p$ and looking at the equations (\ref{mainext1}) and  (\ref{mainext2}) we immediately see that for any $\varepsilon >0$ sufficiently small the function
$G^{(p-1)/p}$ is a supersolution of equation $  -\Delta_pv-(c_{p,n} -\varepsilon  ){\mathcal{I}}_pv/\delta^p=0$ in a relative neighbourhood of $\partial \Omega\cup \{\infty \} $. Thus, passing to the limit as $\varepsilon \to 0$ we conclude that $\lambda_{p,\infty}(\Omega)\geq c_{p,n}$.

As in the proof of Theorem~\ref{mainbounded}, we can use the argument of \cite[Theorem~5]{mamipi} in a relative neighbourhood of any point of $\partial \Omega$  to prove that $\lambda_{p, \infty }\le c_p$. Moreover, by \cite[Example~2]{mamipi} it also follows that $\lambda_{p,\infty}(\Omega)\le  c_{n,p}^*$. So, $\lambda_{p,\infty}(\Omega)\le c_{p,n}$. Thus, $\lambda_{p,\infty}(\Omega)= c_{p,n}$.

\medskip

Assume now that $H_p(\Omega )< c_{p,n}$.We need to prove the existence of a minimizer for (\ref{ray}).
As in the proof of Theorem~\ref{mainbounded},  since $H_p(\Omega)<\lambda_{p,\infty }(\Omega)$, by Lemma~\ref{pinchcriterion} it follows that the positive function of minimal growth $u \in {\mathcal{M}}^{Q_{-H_p(\Omega )\delta^{-p}}}_{\Omega , \{\tilde x_0\}}$   is an Agmon ground state.   Arguing as in the proof of Theorem~\ref{mainbounded} we choose $\beta \in (\alpha ,1) $ such that      $\beta <\alpha +\tilde \gamma$ where $\tilde \gamma\in (0,\gamma )$ is such that $G$ is of class $C^{1,\tilde \gamma}$ in a relative neighbourhood of $\partial \Omega$.   Exactly as in the proof of Theorem ~\ref{mainbounded}, it turns out that $G^{\alpha}-G^{\beta}$ is a positive supersolution to the equation $ -\Delta_pv-H_p(\Omega )\frac{{\mathcal{I}}_pv}{ \delta^p}=0    $ in a  relative neighbourhood of $\partial \Omega $. Hence,
the Agmon ground state $u$ satisfies the condition $u \le C (G^{\alpha}-G^{\beta })$  in a relative neighbourhood of $\partial \Omega $  which provides the validity of the estimate in statement (i) for the function $u$.

\medskip

In  order to analyze the behaviour of $u$ at $\infty $, we use Lemma \ref{subsolinf}. We consider first the case $p<n$. Let $\beta \in (0,1)$ be such $\alpha_1<\beta <\alpha_1 + (p-1)/(n-p)  $. Then by Lemma \ref{subsolinf} we have
$$
-\Delta_p(G^{\alpha_1}-G^{\beta})\geq \lambda_{\alpha_1 }\left| \frac{p-n}{p-1}\right|^p{\mathcal{I}}_p(G^{\alpha_1}-G^{\beta})/\delta^p=
H_p(\Omega ){\mathcal{I}}_p(G^{\alpha_1}-G^{\beta})/\delta^p
$$
in a neighbourhood of $\infty $, which means that  $G^{\alpha_1}-G^{\beta}$ is a supersolution. Thus  $u$ satisfies the condition
$u(x)\le C (G^{\alpha_1}-G^{\beta}  )$, which implies that $u$ satisfies the estimate in statement (ii) in a neighbourhood of $\infty$ (note that for $p<n$, $G(x)\to 0$ as $|x|\to \infty$, hence the leading term in $G^{\alpha_1}-G^{\beta }$ is given by $G^{\alpha_1}$).

As far as the case $p>n$ we argue in the same way. We consider $\beta \in (0,1)$ such that $0<\beta <\alpha _2$ and we get that
$G^{\alpha _2}-G^{\beta }$ is a supersolution in a neighbourhood of $\infty$. Thus the Agmon ground state $u$ satisfies the estimate in statement (iii) in a neighbourhood of $\infty$ (note that for $p>n$, $G(x)\to \infty $ as $|x|\to \infty$, hence the leading term in $G^{\alpha_2}-G^{\beta }$ is given by $G^{\alpha_2}$).

In conclusion, we have proved that  $u$ satisfies the appropriate estimates in statements (i), (ii), (iii). This implies that
$u\in L^p(\Omega; \delta ^{-p})$.

It remains to prove that $\nabla u\in L^p(\Omega )$. We can apply the same argument used in the proof of Theorem~\ref{mainbounded}  to conclude that  $\nabla u\in L^p(U )$, where $U$ is a relative neighbourhood of $\partial \Gw$. On the other hand, since the operator $-\Delta_p - \frac{H_p(\Omega) }{\delta^p} {\mathcal I}_p$ has a Fuchsian type singularity at infinity, it  follows from \cite[Lemma 2.6]{frpi} that there exists $r_0>0$ such that
\begin{equation}\label{grad}
|\nabla u(x)| \leq C\frac{u(x)}{|x|} \qquad \mbox{ for all }  |x|>r_0.
\end{equation}
Since $u\in L^p(\Omega; \delta ^{-p})$, it follows from \eqref{grad} that $\nabla u\in L^p(\Omega )$.
\end{proof}
\section{Lower bounds and non-existence of minimizers}\label{sec5}
In the present section we prove that the existence of a minimizer to the variational problem implies the existence of a spectral gap (equivalently, the absence of a spectral gap implies the non-existence of minimizers).  In the case of a bounded domain, the proof is based on a construction of a suitable subsolution for the  equation $-\Delta_pv-c_p\delta^{-p}{\mathcal{I}}_pv=0$ and a comparison principle  proved in \cite[Proposition~3.1]{marsha}. In the case of unbounded domains, the proof is also based on the use of positive solutions  of minimal growth at infinity for equations of the type $-\Delta_pv-\gl |x|^{-p}{\mathcal{I}}_pv=0$.
\begin{thm}\label{mainlower} Let $\Omega $ be a bounded domain in $\R^n$ of class $C^{1,\gamma}$ with $\gamma \in ]0,1]$  and  fix $0<\lambda \le H_p(\Omega)$.   Let $\alpha \in [(p-1)/p ,1 )$ be such that $\lambda_{\alpha}=\lambda $, and let ${\mathcal{U}}$ be an open neighbourhood of $\partial\Omega$.
If $u\in C({\overline{\Omega \cap \mathcal{U}}}\setminus \partial \Gw)$ is a positive solution of the equation
\begin{equation}\label{lambda_alpha}
    -\Delta _pv- \frac{\lambda }{\delta^p}{\mathcal{I}}_pv=0
\end{equation}
in $\Omega \cap \mathcal{U}$, then there exists a constant $C>0$ such that
\begin{equation}\label{mainlower0}
\delta (x) ^{\alpha }\le C u(x) \qquad \mbox {in } \Omega \cap  {\mathcal{U}}.
\end{equation}
Hence, if $u$ is a minimizer in (\ref{ray}), then $H_p(\Omega )< c_p$.
\end{thm}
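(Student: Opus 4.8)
The plan is to bound $u$ from below, near $\partial\Omega$, by a subsolution of (\ref{lambda_alpha}) of the form $G^{\alpha}+G^{\beta}$ that is comparable to $\delta^{\alpha}$, and then to invoke the comparison principle of \cite[Proposition~3.1]{marsha}. First I would reuse the construction from the proof of Theorem~\ref{mainbounded}: fix $\tilde x_0\in\Omega$ and let $G$ be the positive minimal Green function of the $p$-Laplacian in $\Omega$ with pole at $\tilde x_0$. Since $\partial\Omega\in C^{1,\gamma}$, the function $G$ is of class $C^{1,\tilde\gamma}$ up to $\partial\Omega$ for some $\tilde\gamma\in(0,\gamma)$, it vanishes on $\partial\Omega$, and by the Hopf lemma $\nabla G(x)\ne 0$ for every $x\in\partial\Omega$; hence Lemma~\ref{general} applies, so (\ref{general1}) holds and, using the compactness of $\partial\Omega$, $G\ge c\,\delta$ in a relative neighbourhood of $\partial\Omega$ for some $c>0$ (cf. (\ref{general2})).

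Next, pick $\beta\in\big(\alpha,\min\{1,\alpha+\tilde\gamma\}\big)$, so that $(p-1)/p\le\alpha<\beta<\alpha+\tilde\gamma$ and $\beta<1$. By Lemma~\ref{subsol} there is a neighbourhood $\mathcal{U}_0\subset\mathcal{U}$ of $\partial\Omega$ on which $w:=G^{\alpha}+G^{\beta}$ is a subsolution of $-\Delta_pv=\lambda_{\alpha}\delta^{-p}{\mathcal{I}}_pv$, that is, of (\ref{lambda_alpha}) since $\lambda_{\alpha}=\lambda$ by hypothesis. I would then fix a small $r>0$ with $\Omega_r:=\{x\in\Omega:\delta(x)<r\}\subset\mathcal{U}_0$; its inner boundary $\Gamma:=\{x\in\Omega:\delta(x)=r\}$ is compact and disjoint from $\partial\Omega$, hence $u$ is continuous and strictly positive on $\Gamma$ (as $u\in C(\overline{\Omega\cap\mathcal{U}}\setminus\partial\Omega)$ and $\Gamma\subset\overline{\Omega\cap\mathcal{U}}\setminus\partial\Omega$), while $w$ is bounded there. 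Choose $\varepsilon>0$ with $\varepsilon w\le u$ on $\Gamma$; since $G\to 0$ at $\partial\Omega$, the continuous function $\varepsilon w$ vanishes on $\partial\Omega$, and therefore $\limsup_{x\to\partial\Omega}\big(\varepsilon w(x)-u(x)\big)\le 0$.

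Now I would apply the comparison principle of \cite[Proposition~3.1]{marsha} in $\Omega_r$: its hypotheses are met because $\lambda\le H_p(\Omega)$ makes the operator $-\Delta_p-\lambda\delta^{-p}{\mathcal{I}}_p$ non-negative in $\Omega$, and hence in $\Omega_r$; $\varepsilon w$ is a subsolution and $u$ a positive supersolution of (\ref{lambda_alpha}) there; $\varepsilon w\le u$ on $\Gamma$; and $\varepsilon w$ has the correct behaviour at $\partial\Omega$. This yields $\varepsilon w\le u$ in $\Omega_r$, and combining with $G\ge c\,\delta$ gives $\delta^{\alpha}\le c^{-\alpha}\varepsilon^{-1}u$ in $\Omega_r$. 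Finally, on the set $\{x\in\overline{\Omega\cap\mathcal{U}}:\delta(x)\ge r\}$, which is compact and on which $u$ is continuous and strictly positive while $\delta^{\alpha}$ is bounded, the inequality $\delta^{\alpha}\le C u$ is immediate; enlarging the constant, (\ref{mainlower0}) holds on all of $\Omega\cap\mathcal{U}$.

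For the last assertion, assume $u$ is a minimizer in (\ref{ray}). As recalled in the Introduction, after replacing $u$ by $-u$ if necessary and invoking Harnack's inequality we may take $u>0$; then $u$ solves the Euler--Lagrange equation (\ref{classiceq}) in $\Omega$ (hence equation (\ref{lambda_alpha}) with $\lambda=H_p(\Omega)$), it is continuous in $\Omega$ by interior regularity, and $u\in\widetilde W^{1,p}(\Omega)\subset L^p(\Omega;\delta^{-p})$. If $H_p(\Omega)$ equalled $c_p$, then the unique $\alpha\in[(p-1)/p,1)$ with $\lambda_{\alpha}=H_p(\Omega)$ would be $\alpha=(p-1)/p$, and the estimate just proved would give $\delta^{(p-1)/p}\le C u$ in a boundary collar, whence $\int_{\Omega}u^p\delta^{-p}\dx\ge C^{-p}\int_{\Omega\cap\mathcal{U}}\delta^{-1}\dx=+\infty$, contradicting $u\in L^p(\Omega;\delta^{-p})$ (here one uses that $\int\delta^{-1}$ diverges over a boundary collar of a bounded $C^1$ domain). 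Since $H_p(\Omega)\le c_p$ always, we conclude $H_p(\Omega)<c_p$. The step I expect to be the main obstacle is the clean application of the comparison principle: one must match $u$ against the subsolution on the inner boundary $\Gamma$ only through the scaling factor $\varepsilon$ — using solely the continuity and strict positivity of $u$ away from $\partial\Omega$ that is assumed — while exploiting that $w$ vanishes on $\partial\Omega$ so that the singular boundary condition there is automatic, and one must choose the neighbourhoods $\mathcal{U}_0\supset\Omega_r$ and the constants coherently so that the final constant $C$ in (\ref{mainlower0}) is uniform over the whole of $\Omega\cap\mathcal{U}$.
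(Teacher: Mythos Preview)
Your overall strategy matches the paper's, but there is a genuine gap in the application of the comparison principle, and it bites precisely in the case that matters for the non-existence conclusion.

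The comparison principle of \cite[Proposition~3.1]{marsha} does \emph{not} follow from the pointwise condition $\limsup_{x\to\partial\Omega}(\varepsilon w-u)\le 0$ that you invoke. Because the potential $\lambda/\delta^{p}$ is singular at $\partial\Omega$, Marcus--Shafrir require instead the integral decay condition
\[
\liminf_{r\to 0}\;\frac{1}{r}\int_{D_r} w^{p}\Bigl(\bigl|\tfrac{\nabla w}{w}\bigr|^{p-1}+\bigl|\tfrac{\nabla u}{u}\bigr|^{p-1}\Bigr)\dx=0,
\qquad D_r:=\{x\in\Omega:\ r/2<\delta(x)<r\}.
\]
With $w=G^{\alpha}+G^{\beta}\sim\delta^{\alpha}$ and the estimate $\int_{D_r}(|\nabla u|/u)^{p-1}\dx\le c\,r^{2-p}$ from \cite[Proposition~2.1(ii)]{marsha}, the quantity above behaves like $r^{p\alpha-p+1}$, which tends to $0$ if and only if $\alpha>(p-1)/p$. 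Thus your argument is fine when $\alpha>(p-1)/p$, but breaks down at the endpoint $\alpha=(p-1)/p$, which is exactly the case $H_p(\Omega)=c_p$ needed for the last assertion of the theorem.

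The paper circumvents this by perturbing: it takes $v=G^{\tilde\alpha}+G^{\beta}$ with $\alpha<\tilde\alpha<\beta$, so that $\tilde\alpha>(p-1)/p$ strictly and the integral condition holds; uses the uniformity clause in Lemma~\ref{subsol} (``$\mathcal U$ can be chosen independent of small perturbations of $\alpha,\beta$'') to get $G^{\tilde\alpha}+G^{\beta}\le C u$ with $C$ independent of $\tilde\alpha$; and then lets $\tilde\alpha\downarrow\alpha$ to obtain $G^{\alpha}\le Cu$. Your proof needs this perturbation-and-limit step (and the verification of the integral hypothesis, not merely the vanishing of $w$ at $\partial\Omega$) to be complete.
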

\begin{proof}  Let $x_0\in \Omega$ be fixed, and let $G$ be the positive minimal Green function in $\Gw$ for the $p$-Laplacian with pole at $x_0$, and note that $G$ vanishes at $\partial \Omega$.
Recall that since  $\Omega $ is of class $C^{1,\gamma}$, by standard regularity theory there exists an open neighbourhood $U_0$ of $\bar{\Omega} $ and $\tilde \gamma \in ]0,\gamma ] $ such that $G$ is of class $C^{1,\tilde \gamma }(\overline{ \Omega \cap U_0})$.  Moreover, since $\Omega $ is of class $C^{1,\gamma}$, the Hopf lemma holds, and hence,  $\nabla G(x)\ne 0$ for all $x\in \partial \Omega$.  Fix $\beta \in (0,1)$ such that $\alpha  <\beta <(p-1)/p+\tilde \gamma$. In light of Lemma~\ref{subsol},  there exist $\vge>0$ and  an open neighbourhood  $U\subset U_0$ of $\partial \Omega $  such that for all $\alpha<\tilde \alpha<\ga+\vge<\gb $, the function $v:=G^{\tilde \alpha }+G^{\beta }$ satisfies
$
-\Delta _pv\le \frac{\lambda_{\tilde \alpha }}{\delta^p}{\mathcal{I}}_pv$ in $ \Omega\cap U$. Hence,
$$
-\Delta _pv\le \frac{\lambda_{\alpha}}{\delta^p}{\mathcal{I}}_pv \qquad {\rm in }\ \Omega\cap U.
$$
Let $C$ be a positive constant such that $v\le C u  $ on $\Omega \cap \partial U$ for all $\tilde \alpha\in (0,1) $  sufficiently close to $\alpha $. Then by the comparison principle proved in \cite[Proposition~3.1]{marsha}, we can conclude that
\begin{equation}
\label{lower1}
v\le C u,\ \   {\rm in }\ \Omega\cap U
\end{equation}
provided
\begin{equation}\label{marshacon}
\liminf_{r\to 0} \frac{1}{r} \int_{D_r}v^p\left( \left|\frac{\nabla v}{v}\right|^{p-1}+ \left|\frac{\nabla u}{u}\right|^{p-1} \right)\dx=0,
\end{equation}
where $D_r=\{ x\in \Omega : r/2<\delta (x)<r \}$.  Since $\tilde \alpha >(p-1)/p$,  condition (\ref{marshacon}) can be verified exactly as in the proof of \cite[Lemma~5.1]{marsha}, where $v$ is replaced by $\delta^{\tilde \alpha }+\delta ^{\beta}$: for  this purpose, note in particular that $G(x)$ is asymptotic to $\delta(x) $ as $x\to \partial \Omega$ and that   \cite[Proposition~2.1~(ii)]{marsha}  holds true also in the case of $C^{1,\gamma}$  domains (and actually also in the case of  $C^{0,1}$ domains) as it can be easily verified. 

Since the constant $C$ in (\ref{lower1})  does not depend on $\tilde \alpha $ for $\tilde \alpha $ close enough to $\alpha$, it follows that
$$
G^{\alpha }\le Cu
$$
in a relative neighbourhood of $\partial \Omega$, by which we can immediately deduce (\ref{mainlower0}) (here one should take $C$ sufficiently large in order to control the function $G^{\alpha}$ not only in a small relative neighbourhood of $\partial \Omega $ but also in the whole of $\Omega \cap \mathcal{U}$). If $H_p(\Omega)=c_p$, then $\alpha =(p-1)/p$. Therefore,  (\ref{mainlower0}) clearly implies that $u\notin W^{1,p}_0(\Omega )$, hence $u$ cannot be a minimizer.
\end{proof}
Combining the results of Lemma~\ref{subsol} and Theorem~\ref{mainlower} we obtain the following tight upper and lower bounds for positive solutions of minimal growth near $\partial \Gw$.
\begin{corol}\label{cor1}
Let $\Omega $ be a bounded domain in $\R^n$ of class $C^{1,\gamma}$ with $\gamma \in ]0,1]$  and  fix $0<\lambda \le c_p$.   Let $\alpha \in [(p-1)/p ,1 )$ be such that $\lambda_{\alpha}=\lambda $, and let ${\mathcal{U}}$ be an open neighbourhood of $\partial\Omega$. Let $u\in C({\overline{\Omega \cap \mathcal{U}}}\setminus \partial \Gw)$ be a positive solution of the equation
\begin{equation}\label{lambda_alpha2}
    -\Delta _pv- \frac{\lambda }{\delta^p}{\mathcal{I}}_pv=0 \qquad \mbox {in }\ \Omega\cap {\mathcal{U}}
\end{equation}
of minimal growth in a neighbourhood of infinity in $\Gw$.

Then there exists a constant $C>0$ such that
\begin{equation}\label{mainlower02}
C^{-1}\delta (x) ^{\alpha }\le  u(x) \leq C\delta (x) ^{\alpha } \qquad \mbox {in }\ \Omega\cap {\mathcal{U}}.
\end{equation}
\end{corol}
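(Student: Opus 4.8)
The plan is to derive the lower bound in \eqref{mainlower02} from Theorem~\ref{mainlower} and the upper bound from Lemma~\ref{subsol} together with the minimal growth property of $u$. To set things up, fix a pole $x_0\in\Omega$ and let $G$ be the positive minimal Green function in $\Omega$ for the $p$-Laplacian with pole at $x_0$. As recalled in Section~\ref{sec2}, since $\partial\Omega\in C^{1,\gamma}$ the function $G$ is positive and of class $C^{1,\tilde\gamma}$ up to $\partial\Omega$ for some $\tilde\gamma\in(0,\gamma]$ in a relative neighbourhood $U$ of $\partial\Omega$ (which we take not to contain $x_0$), it vanishes on $\partial\Omega$, and by the Hopf lemma $\nabla G(x)\neq0$ for all $x\in\partial\Omega$; hence Lemma~\ref{general} applies and $G(x)$ is asymptotic to $\delta(x)$ as $x\to\partial\Omega$. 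Note also that, since $0<\lambda\le c_p$ and $\lambda_\alpha$ decreases from $c_p$ to $0$ on $[(p-1)/p,1]$, the exponent $\alpha\in[(p-1)/p,1)$ with $\lambda_\alpha=\lambda$ in the statement is uniquely determined.

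For the lower bound I would repeat the proof of Theorem~\ref{mainlower}: although that theorem is formally stated for $0<\lambda\le H_p(\Omega)$, the derivation of the estimate \eqref{mainlower0} uses only the subsolution $G^{\tilde\alpha}+G^{\beta}$ supplied by Lemma~\ref{subsol} and the comparison principle of \cite[Proposition~3.1]{marsha}, and is therefore valid for every $0<\lambda\le c_p$ (the hypothesis $\lambda\le H_p(\Omega)$ is used there only for the concluding statement about minimizers). This produces a constant $C>0$ with $C^{-1}\delta(x)^\alpha\le u(x)$ in $\Omega\cap\mathcal{U}$.

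For the upper bound I would apply Lemma~\ref{subsol} with the exponent $\alpha$ of the statement and a fixed $\beta$ with $\alpha<\beta<\min\{\alpha+\tilde\gamma,1\}$: since $\alpha\ge(p-1)/p$, the lemma provides a relative neighbourhood $\mathcal{V}\subseteq U$ of $\partial\Omega$ on which $G^{\alpha}-G^{\beta}$ is positive and is a supersolution of $-\Delta_p v=\lambda_\alpha{\mathcal{I}}_p v/\delta^p$; as $\lambda_\alpha=\lambda$, this is exactly equation \eqref{lambda_alpha2}. Since $u$ is a positive solution of \eqref{lambda_alpha2} of minimal growth in a neighbourhood of infinity in $\Omega$, I would choose a smooth open set $\mathcal{K}\Subset\Omega$ so large that $\Omega\setminus\overline{\mathcal{K}}\subset\mathcal{V}\cap\mathcal{U}$ and $\partial\mathcal{K}$ lies in the region on which $u$ is defined; then $G^{\alpha}-G^{\beta}$ is a positive supersolution of \eqref{lambda_alpha2} on $\Omega\setminus\overline{\mathcal{K}}$, and since both $u$ and $G^{\alpha}-G^{\beta}$ are positive and continuous on the compact set $\partial\mathcal{K}$, there is $C_1>0$ with $u\le C_1(G^{\alpha}-G^{\beta})$ on $\partial\mathcal{K}$. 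By the definition of minimal growth this propagates to $u\le C_1(G^{\alpha}-G^{\beta})\le C_1 G^{\alpha}$ on $\Omega\setminus\overline{\mathcal{K}}$, and using once more that $G(x)$ is asymptotic to $\delta(x)$ as $x\to\partial\Omega$, together with the boundedness of $u$ and of $\delta^{-\alpha}$ on the remaining part of $\Omega\cap\mathcal{U}$, we obtain $u(x)\le C\delta(x)^\alpha$ on all of $\Omega\cap\mathcal{U}$ after enlarging $C$. Combining the two one-sided bounds gives \eqref{mainlower02}. The point requiring most care is this last argument, and precisely the geometric matching: one must make sure that the neighbourhood $\mathcal{V}$ produced by Lemma~\ref{subsol} can be taken inside the region $\Omega\cap\mathcal{U}$ on which $u$ is given and that $\partial\mathcal{K}$ can be placed in their overlap, so that the minimal growth hypothesis can legitimately be tested against the supersolution $G^{\alpha}-G^{\beta}$; the rest is routine, using the asymptotics $G\sim\delta$ from Lemma~\ref{general}.
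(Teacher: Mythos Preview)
Your proposal is correct and follows exactly the approach indicated by the paper, which simply says the corollary is obtained ``combining the results of Lemma~\ref{subsol} and Theorem~\ref{mainlower}''. You have spelled out the details the paper leaves implicit: the lower bound comes from the proof of Theorem~\ref{mainlower} (and you rightly note that its argument works for all $0<\lambda\le c_p$, the hypothesis $\lambda\le H_p(\Omega)$ being used there only for the final non-existence claim), while the upper bound comes from the supersolution $G^{\alpha}-G^{\beta}$ of Lemma~\ref{subsol} together with the minimal growth property, exactly as in the proof of \eqref{mainboundedeq} in Theorem~\ref{mainbounded}.
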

\medskip
\begin{rem}{\em  
In the limiting case $\lambda =0$, and under the mild regularity assumptions of Lemma~\ref{general}, one obtains from estimate \eqref{general2}  that \eqref{mainlower02}  holds with the limiting  exponent $\alpha =1$. This gives a strong indication
to our feeling that in general, estimate \eqref{mainlower02} does not hold if the bounded domain $\Gw$ is merely in the $C^{1}$ class. 
Indeed, note that \eqref{mainlower02} does not hold in the class of bounded {\em Lipschitz domains}.  Indeed, for $n=2$, $p=2$, $\lambda =0$ one can take
the {\em Lipschitz domain} $\Omega=]0,1[\times ]0,1[$, and note that the function $u(x,y)=xy$ is a harmonic function of minimal growth near $(0,0)$ that does not satisfy estimate  \eqref{mainlower02} with $\alpha =1$. For other examples concerning the case of a general conic point, $p=2$ and $0<\gl\leq c_2$, see \cite{depips}. 
 }
\end{rem}
Next, we prove that for  a $C^{1,\gamma}$-exterior domain, the existence of a minimizer to the variational problem implies the existence of a spectral gap.
\begin{thm}\label{mainlower1}
Assume that $\Omega $ is an unbounded domain in $\R^n$ with nonempty compact boundary of class $C^{1,\gamma}$ with $\gamma \in ]0,1]$. Fix $p\neq n$ and $0<\lambda\leq H_p(\Omega)$.
Let $\alpha , \alpha_1 \in \, [(p-1)/p, 1[$ and $\alpha_2 \in\, ]0,(p-1)/p]$ be  such that  $\lambda=\lambda_{\alpha}:=(p-1)\alpha ^{p-1}(1-\alpha)$, $\lambda_{\alpha_1}:=\lambda_{\alpha_2 }=|(p-1)/(p-n)   |^p \lambda $.

 If   $u$ is a positive solution of the equation (\ref{lambda_alpha}), then there exists
$C>0$, an open neighbourhood ${\mathcal{U}}$ of $\partial\Omega $ and $M>0$  such that $u$ satisfies the following estimates:
\begin{itemize}
\item[(i)] $ u(x)\geq C\delta^{\alpha} (x)$  for all  $x\in  \Omega  \cap {\mathcal{U}}$.
\item[(ii)] If $p<n$, then $u(x)\geq C|x|^{\frac{\alpha_1(p-n)}{p-1}}$ for all $|x|>M$.
\item[(iii)] If $p>n$, then  $u(x)\geq C|x|^{\frac{\alpha_2(p-n)}{p-1}}$ for all $|x|>M$.
\end{itemize}
Hence,  if $u$ is a minimizer for  (\ref{ray}), then   $H_p(\Omega )< c_{p,n}$.
\end{thm}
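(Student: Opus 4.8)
The plan is to follow the proof of Theorem~\ref{mainlower}, now constructing subsolutions of \eqref{lambda_alpha} both in a relative neighbourhood of $\partial\Omega$ and in a neighbourhood of $\infty$, and to bound $u$ from below by these subsolutions: near $\partial\Omega$ by the comparison principle \cite[Proposition~3.1]{marsha}, and near $\infty$ by the corresponding comparison at the Fuchsian-type singularity of the operator $-\Delta_p-\lambda\delta^{-p}\mathcal{I}_p$ at infinity. As a first step I would fix $\tilde x_0\in\Omega$ and take, as in the proof of Theorem~\ref{mainexterior}, a positive function $G$ on $\Omega$ that coincides in a relative neighbourhood of $\partial\Omega$ with the positive minimal Green function of the $p$-Laplacian with pole at $\tilde x_0$ --- so that $G\in C^{1,\tilde\gamma}$ up to $\partial\Omega$ for some $\tilde\gamma\in(0,\gamma]$, with $G=0$ and $\nabla G\ne 0$ on $\partial\Omega$ by the Hopf lemma --- and such that $G(x)=|x|^{(p-n)/(p-1)}$ for $|x|$ large; thus $\Delta_pG=0$ near $\partial\Omega\cup\{\infty\}$ and, by Lemma~\ref{general}, $|\nabla G(x)|\,\delta(x)/G(x)\to 1$ as $x\to\partial\Omega$.

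To obtain (i) I would fix $\beta\in(\alpha,\alpha+\tilde\gamma)$ and apply Lemma~\ref{subsol}: there exist $\varepsilon>0$ and a relative neighbourhood $U$ of $\partial\Omega$, independent of the exponents, so that for every $\tilde\alpha\in(\alpha,\min\{\alpha+\varepsilon,\beta\})$ the function $v:=G^{\tilde\alpha}+G^{\beta}$ satisfies $-\Delta_pv\le\lambda_{\tilde\alpha}\delta^{-p}\mathcal{I}_pv\le\lambda_{\alpha}\delta^{-p}\mathcal{I}_pv=\lambda\delta^{-p}\mathcal{I}_pv$ in $\Omega\cap U$, where I used that $\lambda_{\alpha}$ is decreasing on $[(p-1)/p,1]$. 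Fixing $C>0$ with $v\le Cu$ on $\Omega\cap\partial U$ uniformly in $\tilde\alpha$, the comparison principle \cite[Proposition~3.1]{marsha} gives $v\le Cu$ in $\Omega\cap U$ once condition \eqref{marshacon} is checked; this is done exactly as in the proof of Theorem~\ref{mainlower}, using $G\sim\delta$ near $\partial\Omega$, $\tilde\alpha>(p-1)/p$, and the validity of \cite[Proposition~2.1~(ii)]{marsha} on $C^{1,\gamma}$ (and even $C^{0,1}$) domains. Since $C$ is independent of $\tilde\alpha$, letting $\tilde\alpha\downarrow\alpha$ and using monotone convergence of $G^{\tilde\alpha}$ to $G^{\alpha}$ (where $G<1$) yields $G^{\alpha}\le Cu$, hence $\delta^{\alpha}\le C'u$, near $\partial\Omega$, which is (i).

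For (ii) and (iii) I would repeat this scheme near $\infty$ using Lemma~\ref{subsolinf} in place of Lemma~\ref{subsol}. If $p<n$, I would pick $\beta\in(\alpha_1,\alpha_1+(p-1)/(n-p))$ and $\tilde\alpha_1\in(\alpha_1,\beta)$ close to $\alpha_1$; by Lemma~\ref{subsolinf}(i) and the definition of $\alpha_1$ the function $G^{\tilde\alpha_1}+G^{\beta}$ satisfies $-\Delta_p(\cdot)\le\lambda_{\tilde\alpha_1}|(p-n)/(p-1)|^{p}\delta^{-p}\mathcal{I}_p(\cdot)\le\lambda_{\alpha_1}|(p-n)/(p-1)|^{p}\delta^{-p}\mathcal{I}_p(\cdot)=\lambda\delta^{-p}\mathcal{I}_p(\cdot)$ for $|x|$ large. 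If $p>n$, I would pick $0<\beta<\tilde\alpha_2<\alpha_2$ and use Lemma~\ref{subsolinf}(ii) together with the monotonicity of $\lambda_{\alpha}$ on $(0,(p-1)/p]$. Comparing these subsolutions with $u$ on a large sphere $\{|x|=M\}$, via a comparison principle adapted to the Fuchsian singularity of the operator at $\infty$ --- whose integrability hypothesis, the analogue of \eqref{marshacon} on dyadic annuli $\{R<|x|<2R\}$, is checked using $G(x)\sim|x|^{(p-n)/(p-1)}$ and the gradient estimate \eqref{grad} of \cite[Lemma~2.6]{frpi} --- gives $G^{\tilde\alpha_i}+G^{\beta}\le Cu$ for $|x|$ large. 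Since the leading term of the sum at $\infty$ is $G^{\tilde\alpha_1}$ when $p<n$ (as $G\to 0$) and $G^{\tilde\alpha_2}$ when $p>n$ (as $G\to\infty$), letting $\tilde\alpha_1\downarrow\alpha_1$, resp.\ $\tilde\alpha_2\uparrow\alpha_2$, gives $u(x)\ge C|x|^{\alpha_1(p-n)/(p-1)}$, resp.\ $u(x)\ge C|x|^{\alpha_2(p-n)/(p-1)}$, for $|x|$ large, i.e.\ (ii) and (iii). I expect this comparison step at $\infty$ to be the main obstacle, since \cite[Proposition~3.1]{marsha} is stated near a boundary; it requires either transplanting the Marcus--Shafrir argument to the singularity at $\infty$ (with cut-off functions supported in annular regions and the integrability condition controlled via \eqref{grad}) or reducing to the boundary case by an inversion, checking that the transformed operator is again covered by the available comparison principle.

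Finally, to deduce the non-existence statement, let $u$ be a minimizer for \eqref{ray}, so $u\in\widetilde W^{1,p}(\Omega)$ solves \eqref{lambda_alpha} with $\lambda=H_p(\Omega)$, and assume for contradiction that $H_p(\Omega)=c_{p,n}=\min\{c_p,c_{p,n}^*\}$. If $c_{p,n}=c_p$, then $\lambda_{\alpha}=c_p$ forces $\alpha=(p-1)/p$, so by (i) $u\ge C\delta^{(p-1)/p}$ near $\partial\Omega$, whence $\int u^{p}\delta^{-p}\dx\ge C^{p}\int\delta^{-1}\dx=\infty$ over a relative neighbourhood of the compact boundary, contradicting $u\in L^{p}(\Omega;\delta^{-p})$. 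If $c_{p,n}=c_{p,n}^*$, then $\lambda_{\alpha_1}=\lambda_{\alpha_2}=|(p-1)/(p-n)|^{p}c_{p,n}^*=c_p$ forces $\alpha_1=\alpha_2=(p-1)/p$, so by (ii) (if $p<n$) or (iii) (if $p>n$) we get $u(x)\ge C|x|^{(p-n)/p}$ for $|x|$ large, whence $\int_{\{|x|>M\}}u^{p}\delta^{-p}\dx\ge c\int_{\{|x|>M\}}|x|^{-n}\dx=\infty$ (using $\delta(x)\sim|x|$ at $\infty$), again contradicting $u\in L^{p}(\Omega;\delta^{-p})$. Therefore $H_p(\Omega)<c_{p,n}$.
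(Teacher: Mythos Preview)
Your treatment of (i) and of the final non-existence statement matches the paper's. For (ii) and (iii), however, the paper takes a different and shorter route that sidesteps precisely the obstacle you flag.

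Rather than constructing subsolutions of $-\Delta_p v-\lambda\delta^{-p}\mathcal{I}_pv=0$ near $\infty$ via Lemma~\ref{subsolinf} and then trying to adapt \cite[Proposition~3.1]{marsha} to the exterior region, the paper changes the potential. Assuming $0\notin\bar\Omega$ and choosing $R>0$ with $\R^n\setminus\Omega\subset B(0,R)$, one has $\delta(x)\le|x|$ for $|x|>R$, so the given positive solution $u$ of \eqref{lambda_alpha} is automatically a \emph{supersolution} of
\[
-\Delta_p v-\frac{\lambda}{|x|^p}\,\mathcal{I}_p v=0\qquad\text{in }\R^n\setminus B(0,R).
\]
For this explicit equation on $\R^n\setminus\{0\}$ the radial function $v_\lambda(x)=|x|^{\beta}$, with $\beta$ the appropriate root of $-\beta|\beta|^{p-2}[(p-1)\beta+n-p]=\lambda$, is known \cite[Example~1.1]{frpi11} to be a positive solution of \emph{minimal growth} at $\infty$. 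The very definition of minimal growth (comparison with any supersolution outside a compact set) then yields $c|x|^{\beta}\le u(x)$ for $|x|>R$, and one checks that $\beta=\alpha_1(p-n)/(p-1)$ if $p<n$ and $\beta=\alpha_2(p-n)/(p-1)$ if $p>n$. No Agmon-trick subsolution, no transplanted comparison principle, no inversion is required.

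Your approach via Lemma~\ref{subsolinf} could presumably be completed, but at the cost of the extra work you anticipate. The paper's trick---trading $\lambda/\delta^{p}$ for the smaller $\lambda/|x|^{p}$ so that $u$ becomes a supersolution of a clean radial problem with an off-the-shelf minimal-growth solution---is both simpler and worth remembering.
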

\begin{proof} Without loss of generality, we may assume that $0\in \R^n\setminus\bar{\Gw}$, and let $R>0$ be such that $\R^n\setminus \Gw\subset B(0,R)$.
Recall that by Theorem~\ref{mainexterior}, $H_p(\Omega )\leq c_{p,n}$.

 Let $u$ be any positive solution of the equation (\ref{lambda_alpha}).
 Since  $H_p(\Omega )\leq c_{p,n}\leq c_p$, estimate (i) follows from the proof of Theorem~\ref{mainlower}.

On the other hand, for any  $0< \gm \leq c^*_{p,n}$ consider the equation
\begin{equation}\label{eq_h3}
  -\Delta _pv- \frac{\gm}{|x|^p}{\mathcal{I}}_pv=0
\end{equation}
in $\R^n\setminus \{0\}$.
Then $v_\gm(x):=|x|^{\gb(\gm)}$ is a positive solution of \eqref{eq_h3} of minimal growth near $\infty$, where $\gb(\gm)\leq (p-n)/p$ is the larger  (resp., smaller) root if $p<n$ (resp., if $p>n$)  of the transcendental equation
$$-\gb |\gb|^{p-2}[\gb(p-1)+n-p]=\gm,$$
see, \cite[Example 1.1]{frpi11}.
Note that $\gb(\gm)= (p-n)/p$ if and only if $\mu = c^*_{p,n}$. Note also that  $\beta (\mu )= \alpha (\mu) (p-n)/(p-1)$ where $\alpha (\mu )$ is the  is the larger  (resp., smaller) positive real number     such that
$\lambda_{\alpha(\mu ) }= |(p-1)/(p-n)|^p \mu $ if $p<n$ (resp., if $p>n$).

 Take,  $\gm= \lambda $. Then $u$ is a positive supersolution of \eqref{eq_h3} in
$\R^n\setminus  B(0,R)$  since $u$ is a solution of (\ref{lambda_alpha}) and $\delta (x)\le |x|$ for all $x\in \R^n\setminus  B(0,R)$.  Therefore, there exists a positive constant $c$ such that  $c|x|^{\gb(\gm)}\leq u(x)$ in $\R^n\setminus  B(0,R)$, and we obtain estimate (ii) if $p<n$ and (iii) if $p>n$.

Estimates (i)-(iii) for $u$ clearly imply that if  $\lambda =H_p(\Omega )=c_{p,n}$,  then any positive solution $u$ satisfies $u\notin W^{1,p}_0(\Omega )$, and therefore, the variational problem does not admit a minimizer.
\end{proof}
Finally, by combining the results of Lemma~\ref{subsolinf}, Corollary~\ref{cor1}, and Theorem~\ref{mainlower1}, we obtain for $C^{1,\gamma}$-exterior domains $\Gw$ tight upper and lower bounds for positive solutions of minimal growth in a neighbourhood of infinity in $\Gw$, 
that is, a neighbourhood of $\partial \Omega\cup \{\infty\}$.

\begin{corol}\label{cor2}
Let  $\Omega $ be an unbounded domain in $\R^n$ with nonempty compact boundary of class $C^{1,\gamma}$ with $\gamma \in ]0,1]$, and fix $p\neq n$ and $0<\lambda\leq c_{p,n}$.
Let $\alpha , \alpha_1 \in \, [(p-1)/p, 1[$ and $\alpha_2 \in\, ]0,(p-1)/p]$ be  such that  $\lambda=\lambda_{\alpha}=(p-1)\alpha ^{p-1}(1-\alpha)$, $\lambda_{\alpha_1}=\lambda_{\alpha_2 }=|(p-1)/(p-n)   |^p \lambda $.
 If   $u$ is a positive solution of the equation (\ref{lambda_alpha}) of minimal growth in a neighbourhood of infinity in $\Gw$,
   then there exists
$C>0$, an open neighbourhood ${\mathcal{U}}$ of $\partial\Omega $ and $M>0$  such that $u$ satisfies the following estimates:
\begin{itemize}
\item[(i)] $ C^{-1}\delta^{\alpha} (x)\leq  u(x)\leq C\delta^{\alpha} (x)$  for all  $x\in   \Omega\cap {\mathcal{U}} $.
\item[(ii)] If $p<n$, then $C^{-1}|x|^{\frac{\alpha_1(p-n)}{p-1}} \leq u(x)\leq C|x|^{\frac{\alpha_1(p-n)}{p-1}}$ for all $|x|>M$.
\item[(iii)] If $p>n$, then  $C^{-1}|x|^{\frac{\alpha_2(p-n)}{p-1}}\leq u(x)\leq C|x|^{\frac{\alpha_2(p-n)}{p-1}}$ for all $|x|>M$.
\end{itemize}
\end{corol}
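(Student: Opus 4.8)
The plan is to assemble the two-sided estimates by pairing, in each of the three regimes (near $\partial\Omega$, near $\infty$ with $p<n$, near $\infty$ with $p>n$), an upper bound coming from a supersolution of the form $G^{\alpha}-G^{\beta}$ together with the minimal-growth property of $u$, and a lower bound coming from a subsolution of the form $G^{\alpha}+G^{\beta}$ together with the comparison principle of \cite[Proposition~3.1]{marsha}, exactly as in the proofs of Theorems~\ref{mainexterior} and \ref{mainlower1}. Concretely, fix $\tilde x_0\in\Omega$ and let $G$ be a positive function on $\Omega$ coinciding with the positive minimal Green function of the $p$-Laplacian with pole $\tilde x_0$ in a relative neighbourhood of $\partial\Omega$, and with $G(x)=|x|^{(p-n)/(p-1)}$ in a neighbourhood of $\infty$. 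Since $\Omega$ is of class $C^{1,\gamma}$, $G$ is of class $C^{1,\tilde\gamma}$ up to $\partial\Omega$ for some $\tilde\gamma\in(0,\gamma)$, and by the Hopf lemma $\nabla G(x)\ne 0$ on $\partial\Omega$; hence $G$ satisfies \eqref{general1} at every boundary point and is asymptotic to $\delta$ near $\partial\Omega$, while near $\infty$ it is asymptotic to $\delta^{(p-n)/(p-1)}$.

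For statement (i): the upper bound $u\le C\delta^\alpha$ near $\partial\Omega$ is literally the estimate (i) of Theorem~\ref{mainexterior}, whose proof only uses that $u$ has minimal growth and that $G^{\alpha}-G^{\beta}$ is a supersolution of \eqref{lambda_alpha} near $\partial\Omega$ (Lemma~\ref{subsol}, choosing $\beta\in(\alpha,\alpha+\tilde\gamma)$); the lower bound $C^{-1}\delta^\alpha\le u$ near $\partial\Omega$ is estimate (i) of Theorem~\ref{mainlower1}, which in turn reduces to the argument of Theorem~\ref{mainlower}, i.e. Lemma~\ref{subsol} produces the subsolution $G^{\tilde\alpha}+G^{\beta}$ for $\tilde\alpha$ slightly above $\alpha$, and \cite[Proposition~3.1]{marsha} applies because the integral condition \eqref{marshacon} holds (using $G\sim\delta$ and \cite[Proposition~2.1(ii)]{marsha}, valid already for $C^{0,1}$ domains). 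Since the constants in both one-sided estimates are independent of the auxiliary parameter near $\alpha$, they combine into a single $C$.

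For statements (ii) and (iii): near $\infty$ the operator is $-\Delta_p-|(p-1)/(p-n)|^p\lambda\,|x|^{-p}\mathcal I_p$ up to a Fuchsian perturbation; by Lemma~\ref{subsolinf}, with the admissible choice of exponents ($\alpha_1<\beta<\alpha_1+(p-1)/(n-p)$ when $p<n$, and $0<\beta<\alpha_2$ when $p>n$), the functions $G^{\alpha_i}\pm G^{\beta}$ are a sub/supersolution of $-\Delta_pv=\lambda_{\alpha_i}|(p-n)/(p-1)|^p\,\mathcal I_pv/\delta^p$ for $|x|>M$, and since $\lambda_{\alpha_i}|(p-n)/(p-1)|^p$ was chosen $=\lambda$, these are genuinely sub/supersolutions of \eqref{lambda_alpha} near $\infty$. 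The upper bound then follows from minimal growth of $u$ (as in Theorem~\ref{mainexterior}(ii)--(iii)) and the lower bound from comparison with the explicit solution $|x|^{\beta(\lambda)}$ of the homogeneous Fuchsian equation, as in Theorem~\ref{mainlower1}(ii)--(iii), noting $\beta(\lambda)=\alpha_i(p-n)/(p-1)$ and $\delta(x)\le|x|$ outside a large ball. In each case the leading term of $G^{\alpha_i}\pm G^{\beta}$ is $G^{\alpha_i}$ (because $G\to 0$ if $p<n$ and $G\to\infty$ if $p>n$), which yields the stated powers of $|x|$.

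The only mild subtlety is bookkeeping: one must choose a single neighbourhood $\mathcal U$ of $\partial\Omega$ and a single radius $M$ on which \emph{all} the sub/supersolution inequalities hold simultaneously, and check that the various comparison constants — which a priori depend on the perturbation exponents $\beta,\tilde\alpha$ — can be taken uniform, so that the resulting $C$ is a single constant; this is exactly the ``independent of small perturbations of $\alpha$ and $\beta$'' clause in Lemma~\ref{subsol} and the corresponding uniformity already invoked in the proofs of Theorems~\ref{mainlower} and \ref{mainlower1}. I do not expect any genuine obstacle here, since every ingredient has already been established; the corollary is essentially the conjunction of Theorems~\ref{mainexterior}, \ref{mainlower1}, and Corollary~\ref{cor1} restricted to solutions of minimal growth.
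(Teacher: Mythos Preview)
Your proposal is correct and follows essentially the same route as the paper: the corollary is stated there without proof, simply as the conjunction of Lemma~\ref{subsolinf}, Corollary~\ref{cor1}, and Theorem~\ref{mainlower1}, and what you have written is precisely a detailed unpacking of that conjunction. One cosmetic point: for the lower bounds near $\infty$ you correctly invoke the argument of Theorem~\ref{mainlower1}, which uses the explicit minimal-growth solution $|x|^{\beta(\lambda)}$ of the Fuchsian equation together with $\delta(x)\le |x|$; the subsolution $G^{\alpha_i}+G^{\beta}$ from Lemma~\ref{subsolinf} that you also mention is not actually needed for that step, so you may drop it from the narrative.
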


\medskip

\begin{center}{\bf Acknowledgments} \end{center}

The authors are grateful to the anonymous referee for reading the paper very carefully and pointing out one blunder of technical type which was possible to fix.

This research was initiated in 2015 when the second author visited the Department of Mathematics of the University of Padova in the frame of the {\it Visiting Scientist Program} of the University of Padova.  Both authors acknowledge the warm hospitality and the financial support received by each other's institution on the occasion of their research visits. The  first author is also a member of the Gruppo Nazionale per l'Analisi Ma\-te\-ma\-ti\-ca, la Probabilit\`{a} e le loro Applicazioni (GNAMPA) of the
Istituto Nazionale di Alta Matematica (INdAM).
 This research was also supported by the INDAM - GNAMPA project 2017 ``Equazioni alle derivate parziali non lineari e disuguaglianze funzionali: aspetti geometrici ed analitici". The second author acknowledges the support of the Israel Science Foundation (grant No. 970/15) founded by the Israel Academy of Sciences and Humanities.

\end{document}